\newtheorem*{theoA}{Theorem A}
\newtheorem*{theoB}{Theorem B}
\newtheorem*{theoC}{Theorem C}
\newtheorem{theo}{Theorem}[section]
\newtheorem{lem}{Lemma}[section]
\newtheorem{rem}{Remark}[section]
\newtheorem{ques}{Question}[section]
\newtheorem{defi}{Definition}
\newtheorem{open problem}{Open problem}[section]
\newcommand{\pa}{\partial}
\newcommand{\ol}{\overline}
\newcommand{\be}{\begin{equation}}
\newcommand{\ee}{\end{equation}}
\newcommand{\bs}{\begin{small}}
\newcommand{\es}{\end{small}}
\newcommand{\beas}{\begin{eqnarray*}}
\newcommand{\eeas}{\end{eqnarray*}}
\newcommand{\bea}{\begin{eqnarray}}
\newcommand{\eea}{\end{eqnarray}}
\renewcommand{\epsilon}{\varepsilon}
\numberwithin{equation}{section}
\begin{document}
\title[Landau-type theorems]{Landau-type theorems for certain bounded poly-analytic and reduced poly-analytic functions}
\author[V. Allu, R. Biswas, R. Mandal and H. Yanagihara]{Vasudevarao Allu, Raju Biswas, Rajib Mandal and Hiroshi Yanagihara}
\date{}
\address{Vasudevarao Allu, Department of Mathematics, School of Basic Science, Indian Institute of Technology Bhubaneswar, Bhubaneswar-752050, Odisha, India.}
\email{avrao@iitbbs.ac.in}
\address{Raju Biswas, Department of Mathematics, Raiganj University, Raiganj, West Bengal-733134, India.}
\email{rajubiswasjanu02@gmail.com}
\address{Rajib Mandal, Department of Mathematics, Raiganj University, Raiganj, West Bengal-733134, India.}
\email{rajibmathresearch@gmail.com}
\address{Hiroshi Yanagihara, Department of Applied Science, Faculty of Engineering, Yamaguchi University, Tokiwadai, Ube, 755-8611, Japan}
\email{hiroshi@yamaguchi-u.ac.jp}
\maketitle
\let\thefootnote\relax
\footnotetext{2020 Mathematics Subject Classification: 31A30, 30C99.}
\footnotetext{Key words and phrases: Landau-type theorem, Bloch theorem, poly-analytic function, univalent functions.}
\begin{abstract} 
The main aim of this paper is to establish several Landau-type theorems for certain bounded poly-analytic functions and reduced poly-analytic functions that generalize some previously established results.
\end{abstract}
\section{Introduction and Preliminaries}
A function $F$ defined in a domain $\Omega\subset\mathbb{C}$ is called a poly-analytic function of order $m$ if it is represented in the following form
\bea\label{e1} F(z)=\sum_{k=0}^{m-1}\ol{z}^k f_k(z),\eea
where all $f_k$ $(k=0,1,\cdots, m-1)$ are analytic functions in $\Omega$ and $\ol{z}$ is the conjugate complex variable to the variable $z=x+iy$ (see \cite[Chapter 1]{B1991} and \cite{B1997}). The function $f_k$ is called the 
$k$-th analytic component of the poly-analytic function $F$. In other words, a poly-analytic function is a polynomial in $\ol{z}$ with coefficients that are analytic functions in $z$. 
These functions were initially explored by G. V. Kolossov \cite{K1908} in the context of developing a general method for solving problems in plane elasticity.
The significant and productive investigations of G. V. Kolossov and N. I. Muskhelishvili and numerous subsequent contributors regarding the applications of these functions to elasticity are well-known (see \cite{M1968}).
A poly-analytic function of order $m=2$ is called bianalytic.
The necessary and sufficient condition for a continuous complex-valued function $F(z)$ to be poly-analytic of the order $m$ in some region $\Omega\subset\mathbb{C}$ is that 
\beas \frac{\pa^m }{\pa \ol{z}^m}F(z)=0\quad\text{for all}\quad z\in\Omega.\eeas
The class of poly-analytic functions has been the subject of study by various authors from a variety of perspectives. For further information, we refer to \cite{A2010,A2011,AB1988,V1999} and the references therein.
A function $F$ defined in a domain $\Omega\subset\mathbb{C}$ is called a reduced poly-analytic function of order $m$ if it is represented in the following form
\beas F(z)=\sum_{k=0}^{m-1}|z|^{2k} f_k(z),\eeas
where all $f_k$ $(k=0,1,\cdots, m-1)$ are analytic functions in $\Omega$. Any polynomial in $z$ and $|z|^2$ may serve as an example of a reduced poly-analytic function. 
For a complex-valued function $F$ in $\Omega$, its Jacobian $J_F(z)$ is given by $J_F(z)=|F_{z}(z)|^2-|F_{\ol{z}}(z)|^2$. The inverse function theorem and a result of Lewy 
\cite{L1936} demonstrates that a harmonic function $F$ is locally univalent in $\Omega$ if, and only if, its Jacobian $J_F(z)\not=0$ in $\Omega$. A harmonic
mapping $F$ is locally univalent and sense-preserving in $\Omega$ if, and only if, $J_F(z)>0$ in $\Omega$.
For a continuously differentiable function $F$, let
\beas \Lambda_F(z)&=&\max_{0\leq \theta\leq 2\pi}|e^{i\theta}F_z+e^{-i\theta}F_{\ol{z}}|=|F_z|+|F_{\ol{z}}|\quad\text{and}\\[2mm]
\lambda_F(z)&=&\min_{0\leq \theta\leq 2\pi}|e^{i\theta}F_z+e^{-i\theta}F_{\ol{z}}|=\left||F_z|-|F_{\ol{z}}|\right|.
\eeas
For $r>0$, let $\mathbb{D}_r=\{z\in\mathbb{C} : |z|<r\}$ denote the open disk about the origin with radius $r$. In particular, $\mathbb{D}_1:=\mathbb{D}$, the open unit disk in the complex plane $\mathbb{C}$. \\[2mm]
A domain $\Omega\subset\mathbb{C}$ is said to be starlike with respect to the origin if, and only if, the line segment joining 0 to every other point $w\in\Omega$ lies entirely in $\Omega$.
\begin{defi}\cite{M1980,BP2014} A continuously differentiable function $f$ on $\Omega$ is said to be fully starlike in $\Omega$ if it is sense-preserving, $f(0)=0$, $f(z)\not=0$ in $\Omega\setminus\{0\}$ and the curve $f(re^{it})$ is starlike with respect to the origin for each $0<r<1$. The last condition is same as  
\beas \frac{\pa}{\pa t} \arg f(re^{it})=\frac{\pa}{\pa t}\text{Im}\left( \log f(re^{it})\right)=\text{Re}\left(\frac{zf_z(z)-\ol{z}f_{\ol{z}}(z)}{f(z)}\right)>0\eeas
for all $z=r e^{it}$ and $r\in(0,1)$.
\end{defi}
\subsection{The classical Landau Theorem and Bloch Theorem} The classical Landau theorem states that if $f$ is an analytic function in the unit disk $\mathbb{D}$ with $f(0)=0$, $f'(0)=1$, and $|f(z)|<M$ in $\mathbb{D}$ for some $M>1$, then $f$ is univalent in $\mathbb{D}_{r_0}$ with 
\beas r_0=\frac{1}{M+\sqrt{M^2-1}},\eeas
and $f(\mathbb{D}_{r_0})$ contains a disk $\mathbb{D}_{R_0}$ with $R_0=Mr_0^2$. This result is sharp, with the extremal function $f_0(z)=Mz(1-Mz)/(M-z)$. The Bloch theorem asserts the 
existence of a positive constant number $b$ such that if $f$ is an analytic function in the unit disk $\mathbb{D}$ with $f'(0)=1$, then $f(\mathbb{D})$ contains a schlicht disk of radius $b$, {\it i.e.}, 
a disk of radius $b$ which is the univalent image of some region in $\mathbb{D}$. The supremum of all such constants $b$ is called the Bloch constant (see \cite{CGH2000}).\\[2mm]
In 2000, Chen {\it et al.} \cite{CGH2000} first established the Landau-type theorems for harmonic mappings in the unit disk $\mathbb{D}$. 
\begin{theoA}\cite{CGH2000}
Let $f$ be a harmonic mapping of the unit disk $\mathbb{D}$ such that $f(0)=0$, $f_{\ol{z}}(0)=0$, $f_z(0)=1$, and $|f(z)|<M$ for $z\in\mathbb{D}$. Then, $f$ is univalent on a disk $\mathbb{D}_{r_0}$
with $r_0=\pi^2/(16mM)$ and $f(\mathbb{D}_{r_0})$ contains a schlicht disk $\mathbb{D}_{R_0}$ with $R_0=r_0/2=\pi^2/(32mM)$, where $m\approx 6.85$ is the minimum of the function $(3-r^2)/\left(r(1-r^2)\right)$ for $0<r<1$. 
\end{theoA}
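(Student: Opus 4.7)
The strategy is the standard one for Landau-type results: use the canonical decomposition $f=h+\overline{g}$ where $h,g$ are analytic on $\mathbb{D}$ with $h(0)=g(0)=0$, $h'(0)=f_z(0)=1$ and $g'(0)=\overline{f_{\bar z}(0)}=0$, so that $h(z)=z+\sum_{n\geq 2}a_nz^n$ and $g(z)=\sum_{n\geq 2}b_nz^n$. The key analytic input is a Heinz-type coefficient estimate for bounded harmonic mappings: since $|f|<M$ with $f(0)=0$, one extracts
\[|a_n|+|b_n|\leq \frac{4M}{\pi},\qquad n\geq 1,\]
by expressing each Taylor coefficient of $h$ and $g$ as a Fourier integral of $f$ on the circle $|z|=r$, taking absolute values, and letting $r\to 1^-$ (together with the sharp harmonic Schwarz inequality $|f(z)|\leq (4M/\pi)\arctan|z|$).

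For univalence on $\mathbb{D}_{r_0}$ I would fix distinct $z_1,z_2\in\mathbb{D}_r$, parametrize the line segment joining them, and integrate $f_z,f_{\bar z}$ along it to obtain
\[|f(z_1)-f(z_2)|\geq |z_1-z_2|\Bigl(1-\max_{|\zeta|\leq r}\bigl[\,|h'(\zeta)-1|+|g'(\zeta)|\,\bigr]\Bigr).\]
The Taylor tails $h'(\zeta)-1=\sum_{n\geq 2}na_n\zeta^{n-1}$ and $g'(\zeta)=\sum_{n\geq 2}nb_n\zeta^{n-1}$ are then bounded term-by-term using the coefficient estimate, and rearranged carefully---retaining the exact values $|a_1|=1$, $|b_1|=0$ rather than only the bound $4M/\pi$---so that the resulting upper bound is controlled by the rational function $(3-r^2)/(r(1-r^2))$. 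The positivity condition, combined with the uniform lower bound $(3-r^2)/(r(1-r^2))\geq m$ on $(0,1)$, then delivers the explicit univalence radius $r_0=\pi^2/(16mM)$. For the schlicht disk, the analogous expansion on the circle $|z|=r_0$ yields
\[|f(z)|\geq r_0-\sum_{n\geq 2}(|a_n|+|b_n|)\,r_0^n\geq r_0-\frac{4M}{\pi}\cdot\frac{r_0^2}{1-r_0}\geq \frac{r_0}{2}=R_0,\]
using the smallness of $r_0$ relative to $M$. A winding-number argument, together with univalence on $\mathbb{D}_{r_0}$, then guarantees $f(\mathbb{D}_{r_0})\supset \mathbb{D}_{R_0}$.

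The main technical obstacle is the univalence step: a naive application of $|a_n|+|b_n|\leq 4M/\pi$ to $\sum_{n\geq 2} n r^{n-1}=(2r-r^2)/(1-r)^2$ produces a weaker bound than the one advertised, so in order to land on the rational function $(3-r^2)/(r(1-r^2))$ one must exploit the fact that the normalization pins down $|a_1|$ and $|b_1|$ exactly. This typically calls for handling the first few terms of the series by hand (or invoking a Schwarz--Pick inequality applied to the analytic functions $(h'-1)/z$ and $g'/z$) before summing the remainder via the coefficient bound. Once this estimate is in place, the minimization producing $m\approx 6.85$ and the explicit form of $r_0$ is a routine calculus exercise, and the schlicht-disk step is comparatively mechanical.
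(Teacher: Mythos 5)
First, a point of reference: the paper does not prove Theorem~A at all --- it is quoted as background from \cite{CGH2000} --- so there is no in-paper proof to compare against, and I am measuring your plan against the argument in the cited source. Your general machinery is sound: the decomposition $f=h+\overline{g}$ with $h(z)=z+\sum_{n\ge2}a_nz^n$, $g(z)=\sum_{n\ge2}b_nz^n$, the coefficient bound $|a_n|+|b_n|\le 4M/\pi$, the segment-integration lower bound for univalence, and the boundary estimate plus a degree argument for the schlicht disk would indeed prove \emph{a} Landau-type theorem for this class.

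The genuine gap is precisely the step you flag as the ``main technical obstacle,'' and your proposed repair does not close it. Keeping the exact values $|a_1|=1$, $|b_1|=0$ is vacuous: the tail $\sum_{n\ge2}n(|a_n|+|b_n|)r^{n-1}$ already excludes the $n=1$ terms, so knowing them exactly changes nothing, and no amount of ``handling the first few terms by hand'' will turn $\sum_{n\ge2}nr^{n-1}=r(2-r)/(1-r)^2$ into $(3-r^2)/(r(1-r^2))$. A structural obstruction confirms this: the minimum $m\approx 6.85$ of $(3-r^2)/(r(1-r^2))$ is attained near $r\approx 0.64$, while $r_0=\pi^2/(16mM)\le \pi/(4m)\approx 0.115$ (since $M\ge\pi/4$); hence the variable $r$ over which $m$ is minimized cannot be the radius at which one sums the Taylor tail. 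In \cite{CGH2000} that minimization is over an \emph{auxiliary} dilation radius: one restricts $f$ to a subdisk $\mathbb{D}_t$, uses a Schwarz--Pick-type gradient bound $\Lambda_f(z)\le C\,M/(\pi(1-t^2))$ there, applies the $\Lambda$-bounded Landau theorem (Theorem~B) to the rescaled map $f(tz)/t$, and then optimizes the resulting radius $t\cdot\pi/(4(1+\Lambda))$ over $t\in(0,1)$; this is where $(3-t^2)/(t(1-t^2))$ and its minimum enter. Your single-radius series argument, carried out honestly, instead yields a univalence radius governed by $1-(4M/\pi)\,r(2-r)/(1-r)^2=0$ --- a \emph{different} (in fact better, Dorff--Nowak-type) constant than the one in the statement. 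So you must either switch to the rescaling argument to obtain the stated $r_0$ and $R_0=r_0/2$, or accept that you are proving a theorem with different constants. The schlicht-disk step as sketched is fine once $r_0$ is fixed, since $(4M/\pi)\,r_0/(1-r_0)\le 1/2$ for the stated value of $r_0$.
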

\begin{theoB}\cite{CGH2000}
Let $f$ be a harmonic mapping of the unit disk $\mathbb{D}$ such that $f(0)=0$, $\lambda_f(0)=1$, and $\Lambda_f(z)<\Lambda$ for $z\in\mathbb{D}$. Then, $f$ is univalent on a disk $\mathbb{D}_{r_1}$
with $r_1=\pi/(4(1+\Lambda))$ and $f(\mathbb{D}_{r_1})$ contains a schlicht disk $\mathbb{D}_{R_1}$ with $R_1=r_1/2=\pi/(8(1+\Lambda))$.
\end{theoB}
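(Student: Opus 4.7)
The plan is to follow the standard Chen--Gauthier--Hengartner template. First, write $f = h + \overline{g}$ with $h, g$ analytic on $\mathbb{D}$; the hypothesis $f(0) = 0$ lets me absorb constants so $h(0) = g(0) = 0$, and after a rotation $z \mapsto e^{i\alpha} z$ and a unimodular post-factor I may assume $h'(0), g'(0) \geq 0$ with $h'(0) - g'(0) = \lambda_f(0) = 1$. The hypothesis $\Lambda_f(z) < \Lambda$ then becomes $|h'(z)| + |g'(z)| < \Lambda$ on $\mathbb{D}$, so in particular $h'(0) + g'(0) < \Lambda$.

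The key analytic step --- and the source of the factor $\pi/4$ in the stated radius --- is Colonna's sharp gradient estimate for complex-valued bounded harmonic mappings: if $F$ is harmonic on $\mathbb{D}$ with $|F|\leq M$, then $|F_z(0)| + |F_{\bar z}(0)| \leq 4M/\pi$. Applying this to the harmonic function $K(z) := h'(z) + \overline{g'(z)}$, for which $|K(z)| \leq |h'(z)| + |g'(z)| < \Lambda$, gives $|h''(0)| + |g''(0)| \leq 4\Lambda/\pi$. Composing with the M\"obius self-map $\phi_{z_0}(w) = (w + z_0)/(1 + \overline{z_0}w)$ to move any interior point to the origin, the same inequality together with the chain rule produces the Schwarz--Pick-type pointwise bound
\[
|h''(z_0)| + |g''(z_0)| \leq \frac{4\Lambda}{\pi(1-|z_0|^2)}\qquad(z_0\in\mathbb{D}),
\]
and radial integration from $0$ to $z$ yields, for $|z|=r$,
\[
|h'(z)-h'(0)| + |g'(z)-g'(0)| \leq \int_0^r\frac{4\Lambda}{\pi(1-t^2)}\,dt = \frac{2\Lambda}{\pi}\log\frac{1+r}{1-r}.
\]

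For univalence on $\mathbb{D}_{r_1}$, I would take distinct $z_1, z_2 \in \mathbb{D}_{r_1}$, parametrize the segment by $\zeta(t) = z_1 + t(z_2-z_1)$, and decompose
\[
f(z_2) - f(z_1) = \int_0^1\bigl[h'(0)(z_2-z_1) + \overline{g'(0)}\,\overline{(z_2-z_1)}\bigr]dt + \int_0^1\bigl[(h'(\zeta)-h'(0))(z_2-z_1) + \overline{(g'(\zeta)-g'(0))}\,\overline{(z_2-z_1)}\bigr]dt.
\]
The first integral has modulus at least $\lambda_f(0)|z_2-z_1| = |z_2-z_1|$ uniformly in $\arg(z_2-z_1)$, and the second is bounded by $|z_2-z_1|\cdot(2\Lambda/\pi)\log((1+r_1)/(1-r_1))$. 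Positivity reduces to $r_1 < \tanh(\pi/(4\Lambda))$, and a routine comparison shows $\pi/(4(1+\Lambda)) < \tanh(\pi/(4\Lambda))$ for all admissible $\Lambda$. For the schlicht disk of radius $R_1$, I apply the same splitting along $[0, z_0]$ with $|z_0|=r_1$: the remainder integrates to $(2\Lambda/\pi)\bigl[(1+r_1)\log(1+r_1) + (1-r_1)\log(1-r_1)\bigr]$, whose quadratic leading term $(2\Lambda/\pi)r_1^2$ evaluates at $r_1 = \pi/(4(1+\Lambda))$ to $(r_1/2)\cdot\Lambda/(1+\Lambda) < r_1/2$, producing $|f(z_0)| \geq R_1 = r_1/2$.

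The main obstacle is recognizing that the sharp Heinz--Colonna input is required. Without the $4/\pi$ factor, the elementary coefficient-series bound $|h'(z) - h'(0)| + |g'(z) - g'(0)| \leq \Lambda r/(1-r)$ (obtained from the Taylor coefficients of $e^{i\theta}h'(z) + e^{-i\theta}g'(z)$) is linear in $r$ near the origin, and this linear growth is too weak for the schlicht-disk claim, which requires the remainder to vanish quadratically in $r$. The delicate point I would verify carefully is that applying Colonna's inequality to $K = h' + \overline{g'}$ (a complex-harmonic combination of the analytic parts, not the mapping $f$ itself) indeed produces the pointwise bound on $|h''| + |g''|$ at the right scale, and that the full integrand $(1+r_1)\log(1+r_1) + (1-r_1)\log(1-r_1)$ beyond its quadratic truncation stays within the narrow gap $\pi/(8(1+\Lambda)^2)$ left by the leading-order balance.
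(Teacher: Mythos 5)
The paper itself offers no proof of Theorem~B: it is quoted from \cite{CGH2000} purely as background, so there is nothing internal to compare your argument against. Judged on its own terms, your reconstruction is sound and close in spirit to the original Chen--Gauthier--Hengartner argument. The decomposition $f=h+\overline{g}$, the splitting of $f(z_2)-f(z_1)$ into a linear part bounded below by $\lambda_f(0)|z_1-z_2|$ plus a remainder, the application of the sharp $4/\pi$ gradient bound to the bounded harmonic function $h'+\overline{g'}$, the M\"obius-invariant pointwise estimate $|h''(z)|+|g''(z)|\le 4\Lambda/\bigl(\pi(1-|z|^2)\bigr)$, and the integrated bound $\tfrac{2\Lambda}{\pi}\log\tfrac{1+r}{1-r}$ are all correct. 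The two verifications you defer do go through: $\pi/(4(1+\Lambda))<\tanh(\pi/(4\Lambda))$ holds for all $\Lambda>1$, and the tail $\tfrac{2\Lambda}{\pi}\sum_{k\ge 2}r_1^{2k}/(k(2k-1))$ sits comfortably inside the gap $\pi/(8(1+\Lambda)^2)$ because $\Lambda r_1^2\le \pi^2/64$.

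The one genuine flaw is your closing claim that the Heinz--Colonna $4/\pi$ input is \emph{required} and that the elementary bound is ``too weak for the schlicht-disk claim.'' This is false, and the reasoning behind it (linear versus quadratic vanishing) is spurious: both bounds are linear in $r$ to leading order (indeed $\tfrac{2\Lambda}{\pi}\log\tfrac{1+r}{1-r}\sim \tfrac{4\Lambda}{\pi}r$ has the \emph{larger} leading coefficient than $\Lambda r/(1-r)\sim\Lambda r$), and the remainder integral is quadratic in $r_1$ in either case. Concretely, Cauchy's estimate applied to the analytic functions $e^{i\theta}h'+e^{-i\theta}g'$, each bounded by $\Lambda$, gives $|h'(z)-h'(0)|+|g'(z)-g'(0)|\le \Lambda r/(1-r)$; univalence then needs $\Lambda r/(1-r)<1$, i.e.\ $r<1/(1+\Lambda)$, which $r_1=\pi/(4(1+\Lambda))$ satisfies since $\pi/4<1$, and the covering estimate needs $\Lambda\int_0^{r_1}\tfrac{t}{1-t}\,dt\le \tfrac{r_1}{2}$, which reduces via $\int_0^{r_1}\tfrac{t}{1-t}\,dt\le \tfrac{r_1^2}{2(1-r_1)}$ to the very same condition $(1+\Lambda)r_1\le 1$. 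So the entire theorem follows from the elementary coefficient bound alone; your Colonna-based route is valid but not necessary, and identifying it as the ``source of the factor $\pi/4$'' misattributes where the constant comes from.
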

It is important to note that \textrm{Theorems A} and \textrm{B} are not sharp. 
Better estimates were subsequently provided in \cite{DN2004}, and it has been the subject of further investigation by numerous researchers (see \cite{CG2011, CPW2011, G2006, 
H2008, 1L2009, 2L2009, LC2018, Z2015}). In particular, Liu and Chen \cite{LC2018} have established the sharp version of \textrm{Theorem B} for $\Lambda >1$ using certain 
geometric methods.
The Landau-type theorem for various classes of harmonic mappings has been studied in \cite{1AK2024,2AK2024,3AK2024,BL2019,LL2021,LL2023,ZLK2024}.\\[2mm]
In 2022, Abdulhadi and Hajj \cite{AH2022} were the first to study the Landau-type theorem for poly-analytic functions.
\begin{theoC}\cite{AH2022}
Let $F(z)=\sum_{k=0}^{m-1}\ol{z}^k f_k(z)$ be a poly-analytic function of order $m$ $(\geq2)$ 
in the unit disk $\mathbb{D}$, where $f_k$ are holomorphic in $\mathbb{D}$, satisfying $f_k(0)=0$, $f_k'(0)=1$ and $\left|f_k(z)\right|\leq M$, $M>1$ for all $k$ and $z\in\mathbb{D}$. Then there is a 
constant $0<r_3<1$ so that $F$ is univalent in $|z| <r_3$. In specific, $r_3$ satisfies
\beas 1-M\left(\frac{r_3(2-r_3)}{(1-r_3)^2}+\sum_{k=1}^{m-1}\frac{r_3^k(1+k-kr_3)}{(1-kr_3)^2}\right)=0\eeas
and $F(\mathbb{D}_{r_3})$ contains a disk $\mathbb{D}_{R_3}$, where 
\beas R_3=r_3-r_3^2\left(\frac{1-r_3^{m-1}}{1-r_3}\right)-M\sum_{k=0}^{m-1} \frac{r_3^{k+2}}{1-r_3}.\eeas
\end{theoC}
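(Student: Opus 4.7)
The plan is to split the argument into two parts: first, establishing the univalence radius $r_3$, and second, verifying the covering radius $R_3$. Both parts rest on uniform estimates for each analytic component $f_k$ that I will derive from the normalization $f_k(0)=0$, $f_k'(0)=1$ together with the bound $|f_k(z)|\leq M$ on $\mathbb{D}$.

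First I would set up the basic coefficient-type estimates. Writing $f_k(z)=z+\sum_{n\geq 2}a_{k,n}z^n$, Cauchy's inequality gives $|a_{k,n}|\leq M$, from which a geometric-series computation yields, for $|z|=r<1$,
\beas
|f_k(z)-z|\leq \frac{Mr^2}{1-r},\qquad |f_k(z)|\leq r+\frac{Mr^2}{1-r},\qquad |f_k'(z)-1|\leq \frac{Mr(2-r)}{(1-r)^2}.
\eeas
These provide the working control on $f_k$ and $f_k'$ throughout the rest of the proof.

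For the univalence claim, I would fix $z_1\neq z_2$ in $\mathbb{D}_{r_3}$, parametrize the segment $z(t)=(1-t)z_1+tz_2$, $t\in[0,1]$, and write
\beas
F(z_2)-F(z_1)=(z_2-z_1)\int_0^1 F_z(z(t))\,dt+\overline{(z_2-z_1)}\int_0^1 F_{\ol z}(z(t))\,dt.
\eeas
Since $F_z(0)=f_0'(0)=1$, the triangle inequality in both directions gives
\beas
|F(z_2)-F(z_1)|\geq |z_2-z_1|\Bigl(1-\max_{|z|\leq r_3}\bigl(|F_z(z)-1|+|F_{\ol z}(z)|\bigr)\Bigr).
\eeas
From the explicit formulas $F_z=f_0'+\sum_{k=1}^{m-1}\ol z^k f_k'$ and $F_{\ol z}=\sum_{k=1}^{m-1}k\ol z^{k-1}f_k$, I would then bound the maximum on the right by
\beas
M\left(\frac{r(2-r)}{(1-r)^2}+\sum_{k=1}^{m-1}\frac{r^k(1+k-kr)}{(1-kr)^2}\right),
\eeas
which, by the defining equation of $r_3$, is strictly less than $1$ for $r<r_3$, yielding injectivity on $\mathbb{D}_{r_3}$. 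The main obstacle is obtaining this exact bound: the naive Cauchy estimates above applied to $r^k|f_k'(z)|+kr^{k-1}|f_k(z)|$ yield only a weaker expression with $(1-r)^2$ in place of $(1-kr)^2$ in the denominator. To recover the sharper $(1-kr)^2$, I would look for a refined Schwarz--Pick style argument tailored to the product $\ol z^k f_k(z)$, most likely via a reparametrization that absorbs the factor $k$ into the denominator, or by summing the coefficient series of $\ol z^k f_k(z)$ and $\ol z^k f_k'(z)$ with a sharper pointwise bound than the termwise triangle inequality.

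Finally, for the covering radius, I would use $F(0)=0$ and the triangle inequality for $|z|=r_3$ to obtain
\beas
|F(z)|\geq |z|-|f_0(z)-z|-\sum_{k=1}^{m-1}|z|^k|f_k(z)|\geq r_3-\frac{Mr_3^2}{1-r_3}-\sum_{k=1}^{m-1}r_3^k\left(r_3+\frac{Mr_3^2}{1-r_3}\right),
\eeas
and a routine algebraic simplification, separating the contribution of $|z|^k\cdot r_3$ from the contribution of $|z|^k\cdot Mr_3^2/(1-r_3)$ and recognizing the resulting geometric sums, shows that the right-hand side equals exactly $R_3$. Since $F$ is univalent on $\mathbb{D}_{r_3}$, sends $0$ to $0$, and maps the boundary circle $|z|=r_3$ outside $\mathbb{D}_{R_3}$, a standard winding-number or degree argument then delivers $F(\mathbb{D}_{r_3})\supset\mathbb{D}_{R_3}$, completing the proof.
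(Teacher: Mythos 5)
A preliminary remark: the paper itself does not prove Theorem C; it is quoted from Abdulhadi and Hajj \cite{AH2022}, and the nearest in-house analogue is Theorem \ref{T2}, which reworks the same statement by the same line-integral method but with the sharper coefficient bound $|a_n|\le M-1/M$ of Lemma \ref{lem1} in place of the crude Cauchy bound $|a_{k,n}|\le M$. Your overall architecture --- the lower bound $|F(z_2)-F(z_1)|\ge|z_1-z_2|\bigl(1-\sup(|F_z-1|+|F_{\ol z}|)\bigr)$ along the segment, followed by a boundary-distance estimate combined with a degree argument for the covering --- is exactly that method, and your computation for the covering radius does reproduce $R_3$ exactly.

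The one step you leave unresolved is not a genuine obstacle; you have the direction of the comparison backwards. For $k\ge1$ and $0<r<1/k$ one has $1-kr\le 1-r$, hence $(1-kr)^2\le(1-r)^2$ and
\[
\frac{r^k(1+k-kr)}{(1-r)^2}\ \le\ \frac{r^k(1+k-kr)}{(1-kr)^2},
\]
so the expression produced by the naive Cauchy estimates is the \emph{smaller} of the two; the theorem's expression with $(1-kr)^2$ is the cruder majorant, and nothing further needs to be proved. Concretely, $|a_{k,n}|\le M$ for all $n\ge1$ gives $|f_k(z)|\le Mr/(1-r)$ and $|f_k'(z)|\le M/(1-r)^2$ on $|z|\le r$, whence
\[
|z|^k|f_k'(z)|+k|z|^{k-1}|f_k(z)|\ \le\ M\,\frac{r^k\bigl(1+k(1-r)\bigr)}{(1-r)^2}\ =\ M\,\frac{r^k(1+k-kr)}{(1-r)^2}\ \le\ M\,\frac{r^k(1+k-kr)}{(1-kr)^2},
\]
and since the defining equation of $r_3$ forces $r_3<1/(m-1)$, summing over $k$ and adding the $f_0'$ term gives a quantity $<1$ for $r<r_3$. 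The ``refined Schwarz--Pick argument'' or reparametrization you propose to hunt for is therefore unnecessary, and looking for a pointwise estimate genuinely sharper than the $(1-r)^2$ one would be effort spent proving something you do not need. With this one observation inserted (and the supremum taken over $|z|\le r$ with $r<r_3$ rather than over the closed disk of radius $r_3$, as your final sentence on injectivity in effect already does), your proof is complete.
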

In consideration of the aforementioned findings, the following questions naturally arise with regard to this study.
\begin{ques}\label{Q1} Can we improve \textrm{Theorem C} further by removing the condition $f_k'(0)=1$ for all $k$?\end{ques}
\begin{ques}\label{Q2} Can we establish the Landau-type theorem for a poly-analytic function of the form (\ref{e1}), where the analytic functions $f_k$ have multiple zeros at the origin?\end{ques}
\begin{ques}\label{Q3}  Can we establish an analogue of \textrm{Theorem C} for reduced poly-analytic functions?\end{ques}
The purpose of this paper is primarily to provide the affirmative answers to Questions \ref{Q1}, \ref{Q2} and \ref{Q3}.
\section{Some lemmas}
To prove our main results, the following lemmas play a crucial role.
\begin{lem}\cite{LP2024}\label{lem3} Let $H(z)$ be an analytic function in $\mathbb{D}$ satisfying $H'(0)=1$ and $|H'(z)|<\Lambda$ for all $z\in\mathbb{D}$ and for some $\Lambda>1$. 
\item[(i)] For all $z_1,z_2\in\mathbb{D}_r$ $(0<r<1, z_1\not=z_2)$, we have
\beas |H(z_1)-H(z_2)|=\left|\int_\gamma H'(z) dz\right|\geq \Lambda\frac{1-\Lambda r}{\Lambda -r}|z_1-z_2|,\eeas
where $\gamma=[z_1,z_2]$ denotes the line segment joining $z_1$ and $z_2$.
\item[(ii)] For $z'\in\pa\mathbb{D}_r$ $(0<r<1)$ with $w'=H(z')\in H(\pa\mathbb{D}_r)$ and $|w'|=\min\{|w|: w\in H(\pa\mathbb{D}_r)\}$, set $\gamma_0=H^{-1}(\Gamma_0)$ and $\Gamma_0=[0, w']$ denotes the line segment joining the origin and $w'$. Then, we have 
\beas |H(z')|\geq \Lambda \int_0^r \frac{1/\Lambda -t}{1-t/\Lambda} dt=\Lambda^2 r+(\Lambda^3-\Lambda)\log(1-r/\Lambda).\eeas 
\end{lem}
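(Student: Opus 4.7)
The plan is to apply the Schwarz--Pick lemma to $G(z):=H'(z)/\Lambda$, which is an analytic self-map of $\mathbb{D}$ with $G(0)=1/\Lambda$. From the inequality $|(G(z)-1/\Lambda)/(1-G(z)/\Lambda)|\le|z|$, completing the square shows that $G(z)$ lies in a Euclidean disk centered on the positive real axis whose closest point to the origin (equivalently, the leftmost real point, once $|z|<1/\Lambda$) is $(1-\Lambda|z|)/(\Lambda-|z|)$. This yields the pointwise estimates $\operatorname{Re} H'(z)\ge\Lambda(1-\Lambda|z|)/(\Lambda-|z|)$ and $|H'(z)|\ge\Lambda(1-\Lambda|z|)/(\Lambda-|z|)$ for $|z|<1/\Lambda$, which drive both estimates in the lemma.

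For (i), I would parametrize $\gamma=[z_2,z_1]$ by $z(t)=z_2+t(z_1-z_2)$, so that $H(z_1)-H(z_2)=(z_1-z_2)\int_0^1 H'(z(t))\,dt$. Since $|z(t)|\le r$ on the segment and $|w|\ge\operatorname{Re} w$ for any $w\in\mathbb{C}$, the pointwise real-part estimate above integrates to
\[
\left|\int_0^1 H'(z(t))\,dt\right|\ \ge\ \int_0^1\operatorname{Re} H'(z(t))\,dt\ \ge\ \Lambda\,\frac{1-\Lambda r}{\Lambda-r},
\]
and (i) follows on multiplying by $|z_1-z_2|$. For (ii), the definition $\gamma_0=H^{-1}([0,w'])$ presupposes $H(0)=0$; granting this, for any $z^*=\rho\,e^{i\theta}\in\partial\mathbb{D}_r$ the radial integration $H(z^*)=e^{i\theta}\int_0^r H'(se^{i\theta})\,ds$ together with the same real-part bound yields $|H(z^*)|\ge\Lambda\int_0^r(1-\Lambda s)/(\Lambda-s)\,ds$. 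Specializing to $z^*=z'$ and expanding the integrand via $(1-\Lambda s)/(\Lambda-s)=\Lambda+(1-\Lambda^2)/(\Lambda-s)$ produces the stated closed form $\Lambda^2 r+(\Lambda^3-\Lambda)\log(1-r/\Lambda)$. Equivalently, following the lemma's own setup, one may use $|w'|=\int_{\gamma_0}|H'(z)|\,|dz|$ together with the coarea-type inequality $\int_{\gamma_0}f(|z|)\,|dz|\ge\int_0^r f(\rho)\,d\rho$ (valid because $\gamma_0$ joins $0$ to $\partial\mathbb{D}_r$), which reproduces the same bound.

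The main obstacle is the first step: turning the purely qualitative bound $|H'|<\Lambda$ into the sharp linear-in-position lower bound for $\operatorname{Re} H'$ and $|H'|$. Extracting the exact shape of the Schwarz--Pick image disk, verifying that it lies in the right half plane throughout $|z|<1/\Lambda$, and locating its leftmost real point are what produce the precise constants appearing in the conclusion; everything beyond that is elementary integration.
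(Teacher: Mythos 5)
The paper does not prove this lemma at all---it is quoted verbatim from \cite{LP2024}---so there is no in-paper argument to compare against; your proof is correct and is essentially the standard one (Schwarz--Pick applied to $H'/\Lambda$ to extract the pointwise bound $\operatorname{Re}H'(z)\ge \Lambda(1-\Lambda|z|)/(\Lambda-|z|)$, then integration along the segment for (i) and along a curve joining $0$ to $\pa\mathbb{D}_r$ for (ii)), with your radial-integration version of (ii) being, if anything, cleaner than the $\gamma_0=H^{-1}(\Gamma_0)$ setup in the statement since it avoids any discussion of that preimage curve and yields the bound at every boundary point. Your observation that part (ii) tacitly requires $H(0)=0$ is also correct; that hypothesis is satisfied in every application the paper makes of the lemma.
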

\begin{lem}\cite{LP2024}\label{lem1} Suppose that $f(z) =\sum_{n=1}^\infty a_nz^n$ is an analytic function in $\mathbb{D}$ satisfying $|f'(0)|=1$ and $|f(z)|\leq M$ in $\mathbb{D}$ and for some $M > 0$. Then, we have $M\geq 1$ and 
\beas |a_n|\leq M-\frac{1}{M}\quad\text{for}\quad n\geq2.\eeas
These inequalities are sharp, with the extremal functions $f_n(z)$, where
\beas f_1(z)=z, f_n(z)=Mz\frac{1-Mz^{n-1}}{M-z^{n-1}}=z-\left(M-\frac{1}{M}\right)z^n-\sum_{k=3}^\infty \frac{M^2-1}{M^{k-1}}z^{(n-1)(k-1)+1}\eeas
for $n\geq 2$.
\end{lem}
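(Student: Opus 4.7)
The plan is to prove $M \geq 1$ by a direct Schwarz lemma argument and then to extract the coefficient $a_n$ via a symmetrization that reduces the estimate to a single Schwarz--Pick inequality at the origin.

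First, applying Schwarz's lemma to $f/M$ (which vanishes at $0$ and has modulus at most $1$ on $\mathbb{D}$) gives $|(f/M)'(0)| = 1/M \leq 1$, so $M \geq 1$. After rotating $f$ I may assume $a_1 = 1$. Fix $n \geq 2$, put $p = n-1$ and $\omega = e^{2\pi i/p}$, and form the cyclic average
\[
h(z) = \frac{1}{p} \sum_{k=0}^{p-1} \omega^{-k} f(\omega^k z).
\]
A short coefficient computation using $\sum_{k=0}^{p-1}\omega^{k(m-1)} = p\,\mathbf{1}_{p\mid m-1}$ shows that $h$ retains only the monomials $z^{1+jp}$, so
\[
h(z) = a_1 z + a_n z^n + a_{2n-1} z^{2n-1} + \cdots,
\]
and since each summand has modulus at most $M$, we have $|h|\leq M$ on $\mathbb{D}$ and $h(0)=0$.

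Because every surviving monomial has the form $z\cdot(z^{n-1})^{j}$, the substitution $w=z^{n-1}$ defines an analytic function
\[
H(w) = a_1 + a_n w + a_{2n-1} w^2 + \cdots
\]
on $\mathbb{D}$, related to $h$ by $h(z) = z\,H(z^{n-1})$. The Schwarz lemma applied to $h/M$ gives $|h(z)|\leq M|z|$ and hence $|h(z)/z|\leq M$; since every $w\in\mathbb{D}$ equals $z^{n-1}$ for some $z\in\mathbb{D}$, this forces $|H(w)|\leq M$ on $\mathbb{D}$. Then $H/M \colon \mathbb{D}\to\overline{\mathbb{D}}$ with $(H/M)(0)=a_1/M$ of modulus $1/M$, and the Schwarz--Pick inequality at the origin yields
\[
\left|\frac{a_n}{M}\right| = \left|(H/M)'(0)\right| \leq 1 - \left|\frac{a_1}{M}\right|^2 = 1 - \frac{1}{M^2},
\]
whence $|a_n|\leq M - 1/M$.

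The main subtle step is the cyclic averaging: a direct Schwarz--Pick estimate on $f(z)/(Mz)$ would only control $|a_2|$, whereas the average annihilates $a_2,\dots,a_{n-1}$ (as well as the other unwanted coefficients) and installs $a_n$ as the first nontrivial Taylor coefficient of $H$ past the constant. Sharpness is transparent from this construction: for the displayed extremal $f_n$, one checks that $H(w)/M = (1-Mw)/(M-w)$ is a disc automorphism saturating Schwarz--Pick with $a_n = -(M-1/M)$.
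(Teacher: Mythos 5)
Your proof is correct. Note that the paper itself gives no proof of this lemma --- it is imported verbatim from \cite{LP2024} --- so there is nothing internal to compare against; but your argument (Schwarz's lemma for $M\geq 1$, then the $(n-1)$-fold cyclic symmetrization to isolate $a_n$ as the first Taylor coefficient of a bounded function $H$ of $w=z^{n-1}$, followed by Schwarz--Pick at the origin) is exactly the classical Wiener-symmetrization proof of the bound $|a_n|\leq M-1/M$, which is how the result is established in the cited source. The only point worth a passing remark is the usual one: Schwarz--Pick is stated for maps into the open disk, so one should dispose of the degenerate case where $|H/M|$ attains the value $1$ in $\mathbb{D}$ (then $H$ is constant, forcing $M=1$ and $a_n=0$, so the inequality holds trivially); your verification of sharpness via the automorphism $(1-Mw)/(M-w)$ is likewise correct.
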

\begin{lem}\label{lem6} Let $F(z)=\sum_{k=0}^{m-1}\ol{z}^k f_k(z)$ be a poly-analytic function of order $m$ 
in the unit disk $\mathbb{D}$, where all the $f_k$ are holomorphic in $\mathbb{D}$. If $f_0(z)=z+\sum_{n=2}^\infty a_nz^n$ and $f_k(z) =\sum_{n=1}^\infty b_{k,n}z^n$ $(k\geq 1)$ are the Taylor series expansions of $f_k$ in $\mathbb{D}$, and satisfy the
condition
\bea\label{f4} \sum_{n=2}^\infty n |a_n|r^{n-1}+\sum_{k=1}^{m-1}r^k\left(\sum_{n=1}^\infty n |b_{k,n}| r^{n-1}\right)+\sum_{k=1}^{m-1} kr^{k}\left(\sum_{n=1}^\infty |b_{k,n}|r^{n-1}\right)< 1\eea
for some $r\in(0, 1)$. Then, $F(z)$ is sense-preserving, univalent and fully starlike in the disk $\mathbb{D}_r$.
\end{lem}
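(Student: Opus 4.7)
My plan is to prove the three assertions---sense-preservation, univalence, and full starlikeness---in that order, each by a direct pointwise estimation of the relevant power series combined with the observation that the left-hand side of (\ref{f4}) is non-decreasing in $\rho = |z|$.

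For sense-preservation I would compute the Wirtinger derivatives
\[ F_z(z) = \sum_{k=0}^{m-1}\overline{z}^k f_k'(z), \qquad F_{\overline{z}}(z) = \sum_{k=1}^{m-1} k\,\overline{z}^{k-1} f_k(z), \]
and, inserting the Taylor expansions of the $f_k$, derive the pointwise bounds
\[ |F_z(z)| \geq 1 - \sum_{n=2}^\infty n|a_n|\rho^{n-1} - \sum_{k=1}^{m-1}\rho^k\sum_{n=1}^\infty n|b_{k,n}|\rho^{n-1}, \]
\[ |F_{\overline{z}}(z)| \leq \sum_{k=1}^{m-1}k\rho^k\sum_{n=1}^\infty |b_{k,n}|\rho^{n-1}. \]
Subtracting, $|F_z(z)|-|F_{\overline z}(z)|$ is bounded below by $1$ minus the full left-hand side of (\ref{f4}); by monotonicity in $\rho$ together with the hypothesis, this is strictly positive on $\mathbb{D}_r$, so $J_F>0$ there.

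For univalence I would fix distinct $z_1,z_2\in\mathbb{D}_r$, parametrize the line segment $\gamma(t)=(1-t)z_1+tz_2$ (which stays in $\mathbb{D}_r$ by convexity), and write
\[ F(z_2)-F(z_1) = (z_2-z_1)\int_0^1 F_z(\gamma(t))\,dt + \overline{(z_2-z_1)}\int_0^1 F_{\overline{z}}(\gamma(t))\,dt. \]
Splitting $F_z = 1+(F_z-1)$ and applying the same two estimates uniformly on $\gamma$, the first integral has modulus at least the difference of $1$ and the first two sums in (\ref{f4}), while the second has modulus at most the third sum in (\ref{f4}). Their difference is therefore strictly positive, and the reverse triangle inequality gives $|F(z_2)-F(z_1)| > 0$.

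The full starlikeness claim is the delicate part and is where I expect the main obstacle. I would first observe that $F(0)=0$, and that the same series estimate shows $F(z)\ne 0$ on $\mathbb{D}_r\setminus\{0\}$, so it suffices to prove $\text{Re}\bigl((zF_z-\overline{z}F_{\overline{z}})/F\bigr)>0$, which I would obtain from the stronger inequality $|zF_z-\overline{z}F_{\overline{z}}-F|<|F|$. A direct calculation yields
\[ zF_z-\overline{z}F_{\overline{z}}-F = \sum_{n=2}^\infty (n-1)a_n z^n + \sum_{k=1}^{m-1}\overline{z}^k\sum_{n=1}^\infty (n-k-1)b_{k,n}z^n. \]
The obstruction is that the triangle inequality introduces the coefficients $|n-k-1|$, which do not appear in (\ref{f4}); the bridging trick is the elementary bound $|n-k-1|+1 \leq n+k$ (valid for all $n,k\ge 1$, with equality only when $n=1$). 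Combining this with the lower bound $|F(z)| \geq \rho - \sum_{n\geq 2}|a_n|\rho^n - \sum_{k=1}^{m-1}\rho^k\sum_{n\geq 1}|b_{k,n}|\rho^n$ and rearranging, the required inequality reduces, after dividing by $\rho$, to exactly the hypothesis (\ref{f4}), which is strict at $\rho=r$ and hence, by monotonicity, strict for all $\rho<r$.
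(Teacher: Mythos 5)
Your proposal is correct, and it diverges from the paper's proof in two respects worth recording. For univalence, the paper does not argue via segment integration at all: it shows that condition (\ref{f4}) forces $\mathrm{Re}\bigl((zF_z-\ol{z}F_{\ol{z}})/F\bigr)>0$ on every circle $C_{r_0}$ with $r_0\le r$, so that $F$ maps each such circle univalently onto a starlike curve, and then invokes sense-preservation together with the degree (argument) principle to get global univalence in $\mathbb{D}_r$; your direct estimate $|F(z_2)-F(z_1)|\ge |z_2-z_1|\bigl(1-(\text{LHS of }(\ref{f4}))\bigr)>0$ along the segment $[z_1,z_2]$ is a legitimate and more elementary substitute that decouples univalence from starlikeness. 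For starlikeness, your handling is actually \emph{tighter} than the paper's in a way that matters: the paper estimates the two $b_{k,n}$-sums in $zF_z-\ol{z}F_{\ol{z}}-F$ separately by the triangle inequality, producing the coefficient $n+(k+1)$, and then compares against a bound for $|F(z)|$ in which the $\ol{z}^k f_k(z)$ terms appear with the wrong sign; as written, that chain of inequalities does not reduce to (\ref{f4}) (one would need $n+k+2$ in place of $n+k$ in the hypothesis). Your version, which keeps the combined coefficient $|n-k-1|$ and uses $|n-k-1|+1\le n+k$ for $n,k\ge 1$ against the correct lower bound $|F(z)|\ge \rho-\sum_{n\ge2}|a_n|\rho^n-\sum_{k}\rho^k\sum_n|b_{k,n}|\rho^n$, is exactly what is needed to close the argument with the stated hypothesis, so your bridging inequality is not merely cosmetic --- it repairs the step where the paper's own proof is loose. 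The sense-preservation part is identical in both arguments.
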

\begin{proof} Following the proof of \textrm{Lemma 3.4} of \cite{LP2024}, we consider 
\beas F(z)=z+\sum_{n=2}^\infty a_nz^n+\sum_{k=1}^{m-1}\ol{z}^k\left(\sum_{n=1}^\infty b_{k,n}z^n\right).\eeas
Therefore,
\beas F_z(z)&=&1+\sum_{n=2}^\infty n a_nz^{n-1}+\sum_{k=1}^{m-1}\ol{z}^k\left(\sum_{n=1}^\infty n b_{k,n}z^{n-1}\right)\quad\text{and}\\[2mm]
F_{\ol{z}}(z)&=&\sum_{k=1}^{m-1} k\ol{z}^{k-1}\left(\sum_{n=1}^\infty b_{k,n}z^n\right).\eeas
It is evident that $J_F(0)=|F_{z}(0)|^2-|F_{\ol{z}}(0)|^2=1$. Using inequality (\ref{f4}), we have
\beas |F_{z}(z)|-|F_{\ol{z}}(z)|&=&\left|1+\sum_{n=2}^\infty n a_nz^{n-1}+\sum_{k=1}^{m-1}\ol{z}^k\left(\sum_{n=1}^\infty n b_{k,n}z^{n-1}\right)\right|\\
&&-\left|\sum_{k=1}^{m-1} k\ol{z}^{k-1}\left(\sum_{n=1}^\infty b_{k,n}z^n\right)\right|\\[2mm]
&> &1-\sum_{n=2}^\infty n |a_n|r^{n-1}-\sum_{k=1}^{m-1}r^k\left(\sum_{n=1}^\infty n |b_{k,n}| r^{n-1}\right)\\[2mm]
&&-\sum_{k=1}^{m-1} kr^{k}\left(\sum_{n=1}^\infty |b_{k,n}|r^{n-1}\right)\geq 0.\eeas
Therefore, $J_F(z)=|F_{z}(z)|^2-|F_{\ol{z}}(z)|^2>0$ for $|z|<r$ and hence, we have $F$ is sense-preserving in $\mathbb{D}_r$. Again, it easy to see that 
\bea\label{f5} zF_z(z)-\ol{z}F_{\ol{z}}(z)-F(z)=\sum_{n=2}^\infty (n-1) a_nz^{n}+\sum_{k=1}^{m-1}\ol{z}^k\left(\sum_{n=1}^\infty n b_{k,n}z^{n}\right)\nonumber\\
-\sum_{k=1}^{m-1}(k+1)\ol{z}^k\left(\sum_{n=1}^\infty b_{k,n}z^n\right).\eea
Fix $r_0\in(0,r]$ and consider the circle $C_{r_0}=\{z\in\mathbb{C}: |z|=r_0\}$. For $z\in C_{r_0}$, it follows from (\ref{f4}) and (\ref{f5}) that
\beas &&\left|zF_z(z)-\ol{z}F_{\ol{z}}(z)-F(z)\right|\\[2mm]
&\leq&\sum_{n=2}^\infty (n-1) |a_n||z|^{n}+\sum_{k=1}^{m-1}|z|^k\left(\sum_{n=1}^\infty n |b_{k,n}||z|^{n}\right)+\sum_{k=1}^{m-1}(k+1)|z|^k\left(\sum_{n=1}^\infty |b_{k,n}| |z|^n\right)\\[1mm]
&=&|z|\left(\sum_{n=2}^\infty n |a_n||z|^{n-1}+\sum_{k=1}^{m-1}|z|^k\left(\sum_{n=1}^\infty n |b_{k,n}| |z|^{n-1}\right)+\sum_{k=1}^{m-1} k |z|^{k}\left(\sum_{n=1}^\infty b_{k,n}|z|^{n-1}\right)\right)\\[1mm]
&&-\sum_{n=2}^\infty|a_n||z|^{n}+\sum_{k=1}^{m-1}|z|^k\left(\sum_{n=1}^\infty |b_{k,n}| |z|^n\right)\\[1mm]
&<& |z|-\sum_{n=2}^\infty|a_n||z|^{n}+\sum_{k=1}^{m-1}|z|^k\left(\sum_{n=1}^\infty |b_{k,n}| |z|^n\right)\\[1mm]
&\leq& \left|f_0(z)+\sum_{k=1}^{m-1}\ol{z}^k f_k(z)\right|=|F(z)|.\eeas
Thus, we have
\beas \left|\frac{zF_z(z)-\ol{z}F_{\ol{z}}(z)}{F(z)}-1\right|<1,\quad{\it i.e.,}\quad
\text{Re}\left(\frac{zF_z(z)-\ol{z}F_{\ol{z}}(z)}{F(z)}\right)>0\quad\text{for}\quad |z|=r_0\eeas
and hence, we have $F$ is univalent on $C_{r_0}$ and it maps $C_{r_0}$ onto a starlike curve. By the sense-preserving property and the degree principle, $F$ is univalent and fully starlike in $\mathbb{D}_r$, since $r_0\in(0, r]$ is arbitrary. This completes the proof.
 \end{proof}
\section{Main results}
In the following result, we obtain the sharp version of the Landau-type theorem for a certain subclass of bounded poly-analytic functions. 
\begin{theo}\label{T1} Suppose that $p_k,m\in\mathbb{N}$ and $M_k\geq 0$ for $k=1,2,\ldots,m-1$. Let $F(z)=\sum_{k=0}^{m-1}\ol{z}^k f_k(z)$ be a poly-analytic function of order $m$ 
in the unit disk $\mathbb{D}$, where all the $f_k$ are holomorphic in $\mathbb{D}$, satisfying $F_z(0)-1=f_0(0)=0$ and $f_k(0)=f_k'(0)=\cdots=f_k^{(p_k-1)}(0)=0$ for $k\in\{1,2,\ldots,m-1\}$. If $|f_0'(z)|<M_0$ for some $M_0>0$
and $\left|f_k^{(p_k)}(z)\right|\leq M_k$ for $k\in\{1,2,\ldots,m-1\}$ and for all $z\in\mathbb{D}$, then $M_0>1$ and $F(z)$ is univalent in $\mathbb{D}_{r_0}$ and $F(\mathbb{D}_{r_0})$ contains 
a schlicht disk $\mathbb{D}_{r_1}$, where $r_0\in(0, 1)$ is the unique positive root of the equation 
\beas &&M_0\frac{1-M_0 r}{M_0-r}-\sum_{k=1}^{m-1}(k+p_k)M_k \frac{r^{k+p_k-1}}{p_k!}=0\\[2mm]\text{and}
&&r_1=M_0^2 r_0+(M_0^3-M_0)\log\left(1-\frac{r_0}{M_0}\right)-\sum_{k=1}^{m-1}M_k \frac{r_0^{p_k+k}}{p_k!}.\eeas
The result is sharp, with an extremal function given by 
\bea\label{f3} G_1(z)=M_0^2 z+(M_0^3-M_0)\log\left(1-\frac{z}{M_0}\right)-\sum_{k=1}^{m-1}M_k \ol{z}^k\frac{z^{p_k}}{p_k!}.\eea
\end{theo}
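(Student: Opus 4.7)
The plan is to isolate the analytic component $f_0$, apply Lemma~\ref{lem3} to it, and treat the remaining antiholomorphic summands as controlled perturbations. Since $F_z(0)=f_0'(0)=1$ while $|f_0'(z)|<M_0$ throughout $\mathbb{D}$, evaluating at the origin gives $M_0>1$ at once. Writing
$$f_0(z)=z+\sum_{n\ge 2}a_n z^n,\qquad f_k(z)=\sum_{n\ge p_k}b_{k,n}z^n\quad (k\ge 1),$$
I would apply Lemma~\ref{lem3} with $H=f_0$ and $\Lambda=M_0$ to obtain, for all $z_1,z_2\in\mathbb{D}_r$ and $r\in(0,1)$, both the two-point lower bound
$$|f_0(z_1)-f_0(z_2)|\ge M_0\,\frac{1-M_0r}{M_0-r}\,|z_1-z_2|$$
and the boundary minimum estimate $\min_{|z|=r}|f_0(z)|\ge M_0^2 r+(M_0^3-M_0)\log(1-r/M_0)$.

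For the remaining components, the hypotheses $f_k^{(j)}(0)=0$ for $j<p_k$ together with $|f_k^{(p_k)}|\le M_k$ yield, by $p_k$-fold integration along the segment from $0$ to $z$, the pointwise bounds
$$|f_k(z)|\le\frac{M_k|z|^{p_k}}{p_k!},\qquad |f_k'(z)|\le\frac{M_k|z|^{p_k-1}}{(p_k-1)!}.$$
For the univalence step, I would fix $z_1\ne z_2$ in $\mathbb{D}_r$ with $r<r_0$ and use the splitting
$$\ol{z_1}^k f_k(z_1)-\ol{z_2}^k f_k(z_2)=\ol{z_1}^k\bigl(f_k(z_1)-f_k(z_2)\bigr)+\bigl(\ol{z_1}^k-\ol{z_2}^k\bigr)f_k(z_2).$$
Combined with $|\ol{z_1}^k-\ol{z_2}^k|\le kr^{k-1}|z_1-z_2|$ and the estimates above, each $k$-summand contributes at most $(k+p_k)M_k r^{k+p_k-1}|z_1-z_2|/p_k!$, so that
$$|F(z_1)-F(z_2)|\ge\Bigl[M_0\,\tfrac{1-M_0r}{M_0-r}-\sum_{k=1}^{m-1}(k+p_k)M_k\,\tfrac{r^{k+p_k-1}}{p_k!}\Bigr]|z_1-z_2|.$$
The bracketed expression equals $1$ at $r=0$ and is strictly decreasing on $(0,1/M_0)$ with nonpositive value at $r=1/M_0$, so it has a unique root $r_0\in(0,1/M_0]\subset(0,1)$, and univalence on $\mathbb{D}_{r_0}$ follows.

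For the schlicht disk, I would combine $|F(z)|\ge|f_0(z)|-\sum_{k=1}^{m-1}r_0^k|f_k(z)|$ on $|z|=r_0$ with the lower bound on $|f_0|$ and $|f_k(z)|\le M_k r_0^{p_k}/p_k!$ to deduce $\min_{|z|=r_0}|F(z)|\ge r_1$. Since $F(0)=0$ and $F$ is univalent on $\mathbb{D}_{r_0}$, a standard argument-principle/connectedness argument then gives $F(\mathbb{D}_{r_0})\supset\mathbb{D}_{r_1}$.

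Sharpness is verified directly on $G_1$ from \eqref{f3}: its analytic component has derivative $M_0(1-M_0z)/(M_0-z)$, which satisfies $|f_0'(0)|=1$ and $|f_0'(z)|<M_0$ on $\mathbb{D}$, while the $k$-th component $-M_k z^{p_k}/p_k!$ has constant $p_k$-th derivative $-M_k$. Restricting $G_1$ to the positive real axis and differentiating yields exactly the left-hand side of the equation defining $r_0$, which vanishes at $r=r_0$; hence $G_1$ fails to be locally injective at $r_0$ along the real axis, and simultaneously $G_1(r_0)$ is real with $|G_1(r_0)|=r_1$, so the schlicht radius cannot be improved either. I expect the main technical point to be the careful bookkeeping in the splitting of $\ol{z_1}^k f_k(z_1)-\ol{z_2}^k f_k(z_2)$, so that the two triangle-inequality contributions combine precisely into the coefficient $(k+p_k)/p_k!$ appearing in the statement; every other step is either a direct application of Lemma~\ref{lem3} or a routine iterated-integration bound.
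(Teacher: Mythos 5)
Your proposal is correct and follows essentially the same route as the paper: apply Lemma~\ref{lem3} to $f_0$, control the $f_k$ by $p_k$-fold integration to get $|f_k(z)|\le M_k|z|^{p_k}/p_k!$ and $|f_k'(z)|\le M_k|z|^{p_k-1}/(p_k-1)!$, combine the two contributions into the coefficient $(k+p_k)/p_k!$, locate $r_0$ by monotonicity, and verify sharpness by restricting $G_1$ to the positive real axis, where the derivative has a sign change at $r_0$. The only cosmetic difference is that you bound $\ol{z_1}^k f_k(z_1)-\ol{z_2}^k f_k(z_2)$ by an algebraic splitting while the paper integrates $F_z\,dz+F_{\ol z}\,d\ol z$ along $[z_1,z_2]$; these yield the identical estimate.
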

\begin{proof} First, we prove that $F$ is univalent in $\mathbb{D}_{r_0}$. Let $z_1,z_2(\not=z_1)\in\mathbb{D}_r$ for $0<r<r_0$.
For the line segment $[z_1, z_2]$ joining $z_1$ and $z_2$, we have
\beas |F(z_2)-F(z_1)|&=&\left|\int_{[z_1,z_2]} \left(F_z(z) dz+F_{\ol{z}}(z) d\ol{z}\right)\right|\\[2mm]
&\geq&\left|\int_{[z_1,z_2]} f_0'(z)dz\right|-\left|\int_{[z_1,z_2]} \left(\sum_{k=1}^{m-1}\ol{z}^k f_k'(z) dz+\sum_{k=1}^{m-1}k\ol{z}^{k-1} f_k(z) d\ol{z}\right)\right|.\eeas
Given that $F_z(0)=1$, it follows that $f_0'(0)=1$, and $|f_0'(z)|<M_0$ for all $z\in\mathbb{D}$ implies that $M_0>1$. In view of \textrm{Lemma \ref{lem3}}, we have 
\bea\label{g1} \left|\int_{[z_1,z_2]} f_0'(z) dz\right|\geq M_0\frac{1-M_0 r}{M_0 -r}|z_1-z_2|.\eea
By the hypothesis, we have that $f_k(z)$ are analytic in $\mathbb{D}$ with $f_k(0)=f_k'(0)=\cdots=f_k^{(p_k-1)}(0)=0$ and $\left|f_k^{(p_k)}(z)\right|\leq M_k$ for $k\in\{1,2,\ldots,m-1\}$ and for all $z\in\mathbb{D}$, therefore
\beas \left|f_k^{(p_k-1)}(z)\right|&=&\left|\int_{[0,z]}f_k^{(p_k)}(z)dz\right|\leq \int_{[0,z]} \left|f_k^{(p_k)}(z)\right| |dz|\leq M_k\int_0^1 |z| dt=M_k |z|,\\[2mm]
\left|f_k^{(p_k-2)}(z)\right|&=&\left|\int_{[0,z]}f_k^{(p_k-1)}(z)dz\right|\leq \int_{[0,z]} \left|f_k^{(p_k-1)}(z)\right| |dz|\leq M_k\int_0^1 |z|^2 dt=M_k \frac{|z|^2}{2}.\eeas
As a result of the continuation of this process, we finally have
\bea\label{f1} \left|f_k'(z)\right|\leq M_k \frac{|z|^{p_k-1}}{(p_k-1)!}\quad\text{and}\quad \left|f_k(z)\right|\leq M_k \frac{|z|^{p_k}}{p_k!}\quad\text{for}\quad k=1,2,\ldots,m-1.\eea
Using (\ref{f1}), we have
\beas&&\left|\int_{[z_1,z_2]} \left(\sum_{k=1}^{m-1}\ol{z}^k f_k'(z) dz+\sum_{k=1}^{m-1}k\ol{z}^{k-1} f_k(z) d\ol{z}\right)\right|\\[2mm]
&\leq& \sum_{k=1}^{m-1}\int_{[z_1,z_2]} |\ol{z}|^k |f_k'(z)| |dz|+\sum_{k=1}^{m-1}\int_{[z_1,z_2]} k|\ol{z}|^{k-1} |f_k(z)| |d\ol{z}|\\[2mm]
&\leq& \sum_{k=1}^{m-1}\int_{[z_1,z_2]} r^k M_k \frac{r^{p_k-1}}{(p_k-1)!} |dz|+\sum_{k=1}^{m-1}\int_{[z_1,z_2]} kr^{k-1}M_k \frac{r^{p_k}}{p_k!} |d\ol{z}|\\[2mm]
&=&|z_2-z_1|\sum_{k=1}^{m-1}(k+p_k) M_k \frac{r^{k+p_k-1}}{p_k!}.\eeas
Therefore, we have
\bea\label{f2}  |F(z_2)-F(z_1)|\geq \left(M_0\frac{1-M_0 r}{M_0 -r}-\sum_{k=1}^{m-1}(k+p_k) M_k \frac{r^{k+p_k-1}}{p_k!}\right)|z_1-z_2|.\eea
Let \beas G(r)=M_0\frac{1-M_0 r}{M_0 -r}-\sum_{k=1}^{m-1}(k+p_k) M_k \frac{r^{k+p_k-1}}{p_k!},\quad r\in[0,1].\eeas 
Differentiating $G(r)$ with respect to $r$, we obtain
\beas G'(r)=M_0\frac{1-M_0^2}{(M_0-r)^2}-\sum_{k=1}^{m-1}(k+p_k)(k+p_k-1) M_k \frac{r^{k+p_k-2}}{p_k!}<0\quad\text{for}\quad r\in[0,1].\eeas
Therefore, $G(r)$ is a monotonically decreasing function of $r\in[0,1]$. It is evident that $G(0)=1$ and $G(1)=-M_0-\sum_{k=1}^{m-1}(k+p_k) M_k/p_k!<0$. As the function $G(r)$ is 
continuous on $[0,1]$, in view of the mean value theorem, there exists a unique $r_0\in(0,1)$ such that $G(r_0)=0$. 
From (\ref{f2}), we have 
\beas |F(z_2)-F(z_1)|>0\quad\text{for}\quad r<r_0.\eeas 
Thus, $F(z_1)\not=F(z_2)$, which shows that $F$ is univalent in the disk $\mathbb{D}_{r_0}$.\\[2mm]
Next, we prove that $F(\mathbb{D}_{r_0})$ contains the schlicht disk $\mathbb{D}_{r_1}$. Note that $F(0)=f_0(0)=0$. For $z'\in\pa\mathbb{D}_{r_0}$ with $w'=F(z')\in F(\pa \mathbb{D}_{r_0})$ and $|w'|=\min\{ |w|: w\in F(\pa \mathbb{D}_{r_0})\}$, by \textrm{Lemma \ref{lem3}} and (\ref{f1}), we have 
\beas |w'|&=&|F(z')|=\left|\sum_{k=0}^{m-1}\ol{z'}^k f_k(z') \right|\\[2mm]
&\geq& |f_0(z')|-\left|\sum_{k=1}^{m-1}\ol{z'}^k f_k(z') \right|\\[2mm]
&\geq& M_0^2 r_0+(M_0^3-M_0)\log\left(1-\frac{r_0}{M_0}\right)-\sum_{k=1}^{m-1}M_k \frac{r_0^{p_k+k}}{p_k!}=r_1,\eeas
which implies that $F(\mathbb{D}_{r_0})\supseteq \mathbb{D}_{r_1}$.\\[2mm]
\indent In order to prove the sharpness of the constants $r_0$ and $r_1$, we consider the the poly-analytic function $G_1(z)$ given by (\ref{f3}). It is evident that $G_1(z)$ satisfies all the 
conditions of \textrm{Theorem \ref{T1}}, and thus, we have that $G_1(z)$ is univalent in $\mathbb{D}_{r_0}$ and $G_1(\mathbb{D}_{r_0})\supseteq \mathbb{D}_{r_1}$. For the sharpness of $r_0$, we need 
to prove that $G_1(z)$ is not univalent in $\mathbb{D}_r$ for $r> r_0$. Let 
\beas G_2(x)=M_0^2 x+(M_0^3-M_0)\log\left(1-\frac{x}{M_0}\right)-\sum_{k=1}^{m-1}M_k \frac{x^{k+p_k}}{p_k!},\quad x\in[0,1].\eeas
It is evident that 
\beas G_2'(x)=M_0\frac{1-M_0x}{M_0-x}-\sum_{k=1}^{m-1}(k+p_k)M_k \frac{x^{k+p_k-1}}{p_k!}\eeas
is continuous and monotonically decreasing function of $x\in[0,1]$ with $G_2'(r_0)=0$, where $r_0\in(0,1)$ is unique. Therefore, $G_2(x)$ is strictly increasing on $[0,r_0)$ and 
strictly decreasing on $(r_0,1]$. Since $G_2(0)=0$, there is a unique $r_2\in(r_0,1]$ such that $G_2(r_2)=0$ if $G_2(1)\leq 0$ and $G_2(r_0)>G(0)=0$, {\it i.e.,}
\beas M_0^2 r_0+(M_0^3-M_0)\log\left(1-\frac{r_0}{M_0}\right)-\sum_{k=1}^{m-1}M_k \frac{r_0^{k+p_k}}{p_k!}>0.\eeas
For fixed $r\in(r_0,1]$, let 
\beas \epsilon=\left\{\begin{array}{lll}
\min\{(r-r_0)/2, (r_2-r_0)/2\},&\text{if}\quad G_2(1)\leq 0,\\[2mm]
(r-r_0)/2,&\text{if}\quad G_2(1)>0.
\end{array}\right.\eeas
Let $x_1=r_0+\epsilon$. By the mean value theorem, there exists a unique $\delta\in(0,r_0)$ such that $x_2=r_0-\delta$ and $G_2(x_1)=G_2(x_2)$. Let $z_1=x_1$ and $z_2=x_2$, then it is evident that $z_1,z_2\in\mathbb{D}_r$ with $z_1\not=z_2$ and $G_1(z_1)=G_1(x_1)=G_2(x_1)=G_2(x_2)=G_1(z_2)$. This shows that $G_1(z)$ is not univalent in $\mathbb{D}_r$, where $r\in(r_0,1]$ and thus, the radius $r_0$ is sharp.\\[2mm]
To prove the sharpness of $r_1$, let $z'=r_0\in\pa\mathbb{D}_{r_0}$. Then, we have
\beas |G_1(z')-G_1(0)|=|G_1(r_0)|=|G_2(r_0)|=G_2(r_0)=r_1.\eeas
Hence, the radius $r_1$ of the schlicht disk is also sharp. This completes the proof. 
\end{proof}
\begin{rem} Setting $m=2$, $p_1=1$ in \textrm{Theorem \ref{T1}} gives \textrm{Theorem 2.1} of \cite{LP2024}.\end{rem}
\begin{rem} Setting $p_k=1$ for $k=1,2,\ldots,m-1$ in \textrm{Theorem \ref{T1}} gives \textrm{Theorem 3.3} of \cite{4AK2024}.\end{rem}
In the following result, we establish an improved version of \textrm{Theorem C} without the conditions $f_k'(0)=1$ for $k=1,2,\ldots, m-1$.
\begin{theo}\label{T2} Suppose that $m\in\mathbb{N}$ and $M_k\geq 0$ for $k=0,1,2,\ldots,m-1$. Let $F(z)=\sum_{k=0}^{m-1}\ol{z}^k f_k(z)$ be a poly-analytic function of order $m$ 
in the unit disk $\mathbb{D}$, where all the $f_k$ are holomorphic in $\mathbb{D}$, satisfying $F_z(0)-1=f_k(0)=0$ for $k\in\{0,1,2,\ldots,m-1\}$. If $\left|f_k(z)\right|\leq M_k$ for $k\in\{0,1,2,\ldots,m-1\}$ and for all $z\in\mathbb{D}$, then $M_0\geq 1$ and $F(z)$ is univalent in $\mathbb{D}_{r_2}$ and $F(\mathbb{D}_{r_2})$ contains 
a schlicht disk $\mathbb{D}_{r_3}$, where $r_2\in(0, 1)$ is the unique positive root of the equation 
\beas &&1-\left(M_0-\frac{1}{M_0}\right)\frac{(2-r)r}{(1-r)^2}-\sum_{k=1}^{m-1}r^k \frac{M_k}{1-r^2}-\sum_{k=1}^{m-1}kr^{k}M_k=0\\[2mm]\text{and}
&&r_3=r_2-\left(M_0-\frac{1}{M_0}\right)\frac{r_2^2}{1-r_2}-\sum_{k=1}^{m-1}M_k r_2^k.\eeas
\end{theo}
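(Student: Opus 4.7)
My plan is to follow the template of the proof of \textrm{Theorem \ref{T1}}, but replacing the derivative bound on $f_0'$ with coefficient estimates from \textrm{Lemma \ref{lem1}} and using Schwarz-type bounds on each $f_k$ $(k\geq 1)$, since we only control $|f_k(z)|\leq M_k$ and have no a priori bound on $f_k'$. First, observe that $F_z(0)=f_0'(0)=1$, so $f_0(z)=z+\sum_{n\geq 2}a_n z^n$ satisfies $|f_0'(0)|=1$ and $|f_0(z)|\leq M_0$ in $\mathbb{D}$; \textrm{Lemma \ref{lem1}} then yields $M_0\geq 1$ and $|a_n|\leq M_0-1/M_0$ for $n\geq 2$. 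For $k\geq 1$, the conditions $f_k(0)=0$ and $|f_k(z)|\leq M_k$ allow us to apply Schwarz's lemma to $f_k/M_k$, giving $|f_k(z)|\leq M_k|z|$, and Schwarz--Pick, giving $|f_k'(z)|\leq M_k/(1-|z|^2)$, throughout $\mathbb{D}$.

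For univalence in $\mathbb{D}_{r_2}$, pick distinct $z_1,z_2\in\mathbb{D}_r$ with $r<r_2$ and decompose
\beas
F(z_2)-F(z_1)&=&\bigl(f_0(z_2)-f_0(z_1)\bigr)\\[1mm]
&&+\sum_{k=1}^{m-1}\Bigl[\ol{z_2}^{\,k}\bigl(f_k(z_2)-f_k(z_1)\bigr)+\bigl(\ol{z_2}^{\,k}-\ol{z_1}^{\,k}\bigr)f_k(z_1)\Bigr].
\eeas
The first difference is bounded below using $|a_n|\leq M_0-1/M_0$ and $|z_2^n-z_1^n|\leq nr^{n-1}|z_2-z_1|$, followed by summing the series $\sum_{n\geq 2}n r^{n-1}=r(2-r)/(1-r)^2$. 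For each $k$-th term I would use the Schwarz--Pick bound on $|f_k'|$ to estimate $|\ol{z_2}^{\,k}(f_k(z_2)-f_k(z_1))|\leq r^k M_k(1-r^2)^{-1}|z_2-z_1|$, and combine $|\ol{z_2}^{\,k}-\ol{z_1}^{\,k}|\leq kr^{k-1}|z_2-z_1|$ with the Schwarz bound $|f_k(z_1)|\leq M_k r$ to dominate the second piece by $kr^k M_k|z_2-z_1|$. Summing gives $|F(z_2)-F(z_1)|\geq |z_2-z_1|\,G(r)$, where $G(r)$ is the LHS of the defining equation. A direct differentiation shows $G'(r)<0$ on $(0,1)$, and since $G(0)=1$ while $G(r)\to-\infty$ as $r\to 1^-$, there is a unique root $r_2\in(0,1)$ and $G(r)>0$ for $r<r_2$, securing univalence.

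For the covering disk, choose $z'\in\pa\mathbb{D}_{r_2}$ with $|F(z')|=\min\{|w|:w\in F(\pa\mathbb{D}_{r_2})\}$, so that $F(\mathbb{D}_{r_2})\supseteq\mathbb{D}_{|F(z')|}$. Then
\beas
|F(z')|\geq |f_0(z')|-\sum_{k=1}^{m-1}|z'|^k|f_k(z')|\geq \left(r_2-\left(M_0-\frac{1}{M_0}\right)\frac{r_2^2}{1-r_2}\right)-\sum_{k=1}^{m-1}M_k r_2^k=r_3,
\eeas
where the lower estimate for $|f_0(z')|$ uses $|a_n|\leq M_0-1/M_0$ together with $\sum_{n\geq 2}r_2^n=r_2^2/(1-r_2)$, and the hypothesis $|f_k(z')|\leq M_k$ is used directly (not the sharper Schwarz bound $M_k r_2$, which explains the form of $r_3$ in the statement). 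The main obstacle I anticipate is simply matching the stated expressions exactly: one must pair the Schwarz--Pick derivative bound with the Schwarz function bound in the univalence step, while reverting to the raw bound $|f_k|\leq M_k$ in the covering step, and verify monotonicity of $G(r)$ with a short direct computation.
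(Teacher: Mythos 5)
Your proposal is correct and follows essentially the same route as the paper: the same use of Lemma \ref{lem1} for $M_0\geq 1$ and $|a_n|\leq M_0-1/M_0$, the same Schwarz and Schwarz--Pick bounds $|f_k(z)|\leq M_k|z|$ and $|f_k'(z)|\leq M_k/(1-r^2)$, the same minorant $G_3(r)$ with the same monotonicity argument, and the same covering estimate (including the deliberate use of the raw bound $|f_k|\leq M_k$ there). The only difference is cosmetic: you split $F(z_2)-F(z_1)$ by an algebraic telescoping of each term, whereas the paper writes it as the line integral $\int_{[z_1,z_2]}(F_z\,dz+F_{\ol{z}}\,d\ol{z})$ and bounds the integrand; the resulting term-by-term estimates are identical.
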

\begin{proof} First, we prove that $F$ is univalent in $\mathbb{D}_{r_2}$. Let $z_1,z_2(\not=z_1)\in\mathbb{D}_r$ for $0<r<r_2$. Since $F_z(0)=1$, it follows that $f_0'(0)=1$.
For the line segment $[z_1, z_2]$ joining $z_1$ and $z_2$, we have
\beas |F(z_2)-F(z_1)|&=&\left|\int_{[z_1,z_2]} \left(F_z(z) dz+F_{\ol{z}}(z) d\ol{z}\right)\right|\\
&=&\left|\int_{[z_1,z_2]} \left(F_z(0) dz+F_{\ol{z}}(0) d\ol{z}\right)\right.\\[1mm]&&\left.+\int_{[z_1,z_2]} \left((F_z(z)-F_z(0))dz+(F_{\ol{z}}(z)-F_{\ol{z}}(0)) d\ol{z}\right)\right|\eeas
\beas\qquad&\geq &\left|\int_{[z_1,z_2]} dz\right|-\left|\int_{[z_1,z_2]} \left(f_0'(z)-1\right)dz\right|\\[2mm]
&&-\left|\int_{[z_1,z_2]} \left(\sum_{k=1}^{m-1}\ol{z}^k f_k'(z) dz+\sum_{k=1}^{m-1}k\ol{z}^{k-1} f_k(z) d\ol{z}\right)\right|.\eeas
Since $f_0(0)=0$, $f_0'(0)=1$ and $|f_0(z)|\leq M_0$ for $z\in\mathbb{D}$, let $f_0(z)=z+\sum_{n=2}^\infty a_{n,0}z^n$ be the Taylor series expansion of $f_0(z)$.  
In view of \textrm{Lemma \ref{lem1}}, we have $M_0\geq 1$ and $|a_{n,0}|\leq M_0-1/M_0$ for $n\geq 2$. Thus, we have 
\bea\label{g5} \left|\int_{[z_1,z_2]}\left( f_0'(z)-1\right) dz\right|&\leq&  |z_1-z_2|\left(M_0-\frac{1}{M_0}\right)\sum_{n=2}^\infty nr^{n-1}\nonumber\\[2mm]
&=& |z_1-z_2|\left(M_0-\frac{1}{M_0}\right)\frac{(2-r)r}{(1-r)^2}.\eea
By the hypothesis, we have that $f_k(z)$ are analytic in $\mathbb{D}$ with $f_k(0)=0$ and $\left|f_k(z)\right|\leq M_k$ in $\mathbb{D}$ for $k\in\{1,2,\ldots,m-1\}$.
According to the Schwarz's lemma, if $f:\mathbb{D}\to\mathbb{D}$ is analytic with $f(0)=0$, then $|f(z)|\leq |z|$.  A sharpened form of the Schwarz lemma, known as the Schwarz-Pick lemma, gives the estimate $(1-|z|^2)|f'(z)|\leq 1-|f(z)|^2\leq 1$ for any analytic function $f:\mathbb{D}\to\mathbb{D}$ and $z\in\mathbb{D}$.
Thus, we have
\bea\label{f7}\left|f_k(z)\right|\leq M_k |z|\quad\text{and}\quad \left|f_k'(z)\right|\leq\frac{M_k}{1-r^2}\quad\text{for}\quad k=1,2,\ldots,m-1.\eea
Using (\ref{f7}), we have
\beas&&\left|\int_{[z_1,z_2]} \left(\sum_{k=1}^{m-1}\ol{z}^k f_k'(z) dz+\sum_{k=1}^{m-1}k\ol{z}^{k-1} f_k(z) d\ol{z}\right)\right|\\[2mm]
&\leq& \sum_{k=1}^{m-1}\int_{[z_1,z_2]} |\ol{z}|^k |f_k'(z)| |dz|+\sum_{k=1}^{m-1}\int_{[z_1,z_2]} k|\ol{z}|^{k-1} |f_k(z)| |d\ol{z}|\\[2mm]
&\leq& \sum_{k=1}^{m-1}\int_{[z_1,z_2]} r^k\frac{M_k}{1-r^2} |dz|+\sum_{k=1}^{m-1}\int_{[z_1,z_2]} kr^{k}M_k|d\ol{z}|\\[2mm]
&=&|z_2-z_1|\left(\sum_{k=1}^{m-1}r^k \frac{M_k}{1-r^2}+\sum_{k=1}^{m-1}kr^{k}M_k\right).\eeas
Therefore, 
\be\label{f6}  |F(z_2)-F(z_1)|\geq \left(1-\left(M_0-\frac{1}{M_0}\right)\frac{(2-r)r}{(1-r)^2}-\sum_{k=1}^{m-1}r^k \frac{M_k}{1-r^2}-\sum_{k=1}^{m-1}kr^{k}M_k\right)|z_1-z_2|.\ee
Let \beas G_3(r)=1-\left(M_0-\frac{1}{M_0}\right)\frac{(2-r)r}{(1-r)^2}-\sum_{k=1}^{m-1}r^k\frac{ M_k}{(1-r^2)}-\sum_{k=1}^{m-1}kr^{k}M_k,\quad r\in[0,1).\eeas 
Differentiating $G_3(r)$ with respect to $r$, we obtain
\beas G_3'(r)=-\frac{2\left(M_0-1/M_0\right)}{(1-r)^3}-\sum_{k=1}^{m-1} M_k\left(\frac{2 r^{k+1}}{(1-r^2)^2}+\frac{k r^{k-1}}{1-r^2}\right)-\sum_{k=1}^{m-1}k^2r^{k-1}M_k<0\eeas
for $r\in[0,1)$.
Therefore, $G_3(r)$ is a monotonically decreasing function of $r\in[0,1)$ with $G_3(0)=1$ and $\lim_{r\to 1^-}G_3(r)=-\infty$. As the function $G_3(r)$ is continuous on $[0,1)$, in view of the mean value theorem, there exists a unique $r_2\in(0,1)$ such that $G_3(r_2)=0$. 
From (\ref{f6}), we have 
\beas |F(z_2)-F(z_1)|>0\quad\text{for}\quad r<r_2.\eeas 
Thus, $F(z_1)\not=F(z_2)$, which shows that $F$ is univalent in the disk $\mathbb{D}_{r_2}$.\\[2mm]
Next, we prove that $F(\mathbb{D}_{r_2})$ contains the schlicht disk $\mathbb{D}_{r_3}$. Note that $F(0)=f_0(0)=0$. For any $z'\in\pa\mathbb{D}_{r_2}$ with $w'=F(z')\in F(\pa \mathbb{D}_{r_2})$, it follows from \textrm{Lemma \ref{lem1}} that 
\beas |w'|=|F(z')|&=&\left|z'+\sum_{n=2}^\infty a_{n,0}z'^n+\sum_{k=1}^{m-1}\ol{z'}^k f_k(z') \right|\\[2mm]
&\geq&\left|z'\right|-\sum_{n=2}^\infty \left|a_{n,0}\right|\left|z'\right|^n-\sum_{k=1}^{m-1}\left|\ol{z'}\right|^k \left|f_k(z')\right|\\[2mm]
&\geq& r_2-\left(M_0-\frac{1}{M_0}\right)\frac{r_2^2}{1-r_2}-\sum_{k=1}^{m-1}M_k r_2^k=r_3,\eeas
which implies that $F(\mathbb{D}_{r_2})\supseteq \mathbb{D}_{r_3}$. This completes the proof.
\end{proof}
\begin{rem} 
If $M_k=M$ for $k\in\{0,1,\ldots,m-1\}$ in \textrm{Theorem \ref{T2}}, then the result is an improved version of \textrm{Theorem C}, in which the conditions $f_k'(0)=1$ are removed.
\end{rem}
The addition of the conditions $f_k'(0)=1$ for $k=1,2,…,m-1$ to \textrm{Theorem \ref{T2}} yields the following result.
 \begin{theo}\label{T3} Suppose that $m\in\mathbb{N}$ and $M_k\geq 0$ for $k=0,1,\ldots,m-1$. Let $F(z)=\sum_{k=0}^{m-1}\ol{z}^k f_k(z)$ be a poly-analytic function of order $m$ 
in the unit disk $\mathbb{D}$, where all the $f_k$ are holomorphic in $\mathbb{D}$, satisfying $F_z(0)-1=f_k(0)=0$ for $k\in\{0,1,2,\ldots,m-1\}$ and $f_k'(0)=1$ $(k=1,2,\dots,m-1)$. If $\left|f_k(z)\right|\leq M_k$ for $k\in\{0,1,2,\ldots,m-1\}$ and for all $z\in\mathbb{D}$, then $M_k\geq 1$ for $k\in\{0,1,2,\ldots,m-1\}$ and $F(z)$ is sense-preserving, univalent and fully starlike in the disk $\mathbb{D}_{r_4}$, where $r_4\in(0, 1)$ is the unique positive root of the equation 
\beas 1-\sum_{k=0}^{m-1}\left(M_k-\frac{1}{M_k}\right)\frac{r^{k+1}(2-r+k(1-r))}{(1-r)^2}-\sum_{k=1}^{m-1}(k+1)r^k=0.\eeas
\end{theo}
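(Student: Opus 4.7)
The plan is to reduce \textrm{Theorem \ref{T3}} to \textrm{Lemma \ref{lem6}} by inserting coefficient bounds obtained from \textrm{Lemma \ref{lem1}}. Since $F_z(0)=1$ forces $f_0'(0)=1$, I would write $f_0(z)=z+\sum_{n\geq 2}a_{n,0}z^n$; the hypothesis $|f_0|\leq M_0$ together with \textrm{Lemma \ref{lem1}} then gives $M_0\geq 1$ and $|a_{n,0}|\leq M_0-1/M_0$ for $n\geq 2$. Similarly, for each $k\in\{1,\ldots,m-1\}$, from $f_k(0)=0$, $f_k'(0)=1$ and $|f_k|\leq M_k$, \textrm{Lemma \ref{lem1}} yields $f_k(z)=z+\sum_{n\geq 2}b_{k,n}z^n$ with $M_k\geq 1$ and $|b_{k,n}|\leq M_k-1/M_k$ for $n\geq 2$ (so $b_{k,1}=1$).

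Next I would substitute these bounds into the hypothesis (\ref{f4}) of \textrm{Lemma \ref{lem6}}. Using the identities $\sum_{n\geq 2}nr^{n-1}=r(2-r)/(1-r)^2$ and $\sum_{n\geq 2}r^{n-1}=r/(1-r)$, each of the three sums in (\ref{f4}) decomposes into the contribution of the leading coefficient $b_{k,1}=1$ and a tail bounded by a multiple of $M_k-1/M_k$. The leading-coefficient pieces from the second and third sums collect into $\sum_{k=1}^{m-1}(r^k+kr^k)=\sum_{k=1}^{m-1}(k+1)r^k$. The tail pieces at level $k\geq 1$ combine via
$$r^{k+1}\!\left[\frac{2-r}{(1-r)^2}+\frac{k}{1-r}\right]=\frac{r^{k+1}(2-r+k(1-r))}{(1-r)^2},$$
while the $k=0$ tail from the first sum contributes $(M_0-1/M_0)r(2-r)/(1-r)^2$, which fits the same template with $k=0$. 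Therefore (\ref{f4}) is equivalent to
$$\sum_{k=0}^{m-1}\!\left(M_k-\frac{1}{M_k}\right)\!\frac{r^{k+1}(2-r+k(1-r))}{(1-r)^2}+\sum_{k=1}^{m-1}(k+1)r^k<1,$$
i.e.\ to $G_4(r)>0$, where $G_4(r)$ denotes the left-hand side of the defining equation of $r_4$.

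It remains to establish existence and uniqueness of $r_4\in(0,1)$. I would check that $G_4$ is continuous on $[0,1)$, with $G_4(0)=1$, and $G_4(r)\to-\infty$ as $r\to 1^-$ (driven by the $1/(1-r)^2$ factors). Differentiating term by term, $G_4'(r)<0$ on $[0,1)$: the factor $r^{k+1}(2-r+k(1-r))/(1-r)^2=r^{k+1}[(2-r)/(1-r)^2+k/(1-r)]$ is a product of nonnegative increasing functions of $r$ on $[0,1)$, and $(k+1)r^k$ is trivially increasing. The intermediate value theorem then supplies the unique root $r_4$, and \textrm{Lemma \ref{lem6}} applied with any $r<r_4$ delivers that $F$ is sense-preserving, univalent and fully starlike in $\mathbb{D}_{r_4}$. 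The only delicate step is the algebraic regrouping that collapses the two tail terms at each level $k\geq 1$ into the single factor $2-r+k(1-r)$; the monotonicity check for $G_4$ is routine but must handle the contribution from every $k$ uniformly.
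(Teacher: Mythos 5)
Your proposal is correct and follows essentially the same route as the paper: the coefficient bounds $|a_{n,0}|,|b_{k,n}|\leq M_k-1/M_k$ from Lemma \ref{lem1} are inserted into condition (\ref{f4}) of Lemma \ref{lem6}, the geometric series are summed in closed form to produce exactly the function $H(r)$ whose unique root is $r_4$, and monotonicity plus the intermediate value theorem finishes the argument. The only quibble is that (\ref{f4}) is \emph{implied by} (not ``equivalent to'') your displayed inequality, since the coefficient bounds are one-sided; this does not affect the validity of the argument.
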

\begin{proof} 
Let $f_0(z)=z+\sum_{n=2}^\infty a_nz^n$ and $f_k(z) =z+\sum_{n=2}^\infty b_{k,n}z^n$ for $k=1,2,\ldots, m-1$.
In view of \textrm{Lemma \ref{lem1}}, we have $M_k\geq 1$ for $k\in\{0,1,2,\ldots,m-1\}$, and 
\beas |a_n|\leq M_0-\frac{1}{M_0},~ |b_{k,n}|\leq M_k-\frac{1}{M_k}\quad\text{for}\quad n\geq 2\quad\text{and}\quad k=1,2,\ldots,m-1.\eeas 
Therefore, 
\beas &&\sum_{n=2}^\infty n |a_n|r^{n-1}+\sum_{k=1}^{m-1}r^k\left(\sum_{n=1}^\infty n |b_{k,n}| r^{n-1}\right)+\sum_{k=1}^{m-1} kr^{k}\left(\sum_{n=1}^\infty |b_{k,n}|r^{n-1}\right)\\[2mm]
 &=&\sum_{n=2}^\infty n |a_n|r^{n-1}+\sum_{k=1}^{m-1}r^k\left(1+\sum_{n=2}^\infty n |b_{k,n}| r^{n-1}\right)+\sum_{k=1}^{m-1} kr^{k}\left(1+\sum_{n=2}^\infty |b_{k,n}|r^{n-1}\right)\\[2mm]
&\leq& \left(M_0-\frac{1}{M_0}\right)\sum_{n=2}^\infty n r^{n-1}+\sum_{k=1}^{m-1}\left(M_k-\frac{1}{M_k}\right)r^k\left(\sum_{n=2}^\infty n r^{n-1}\right)\\[2mm]
&&+\sum_{k=1}^{m-1} k\left(M_k-\frac{1}{M_k}\right)r^{k}\left(\sum_{n=2}^\infty r^{n-1}\right)+\sum_{k=1}^{m-1}(k+1)r^k\\[2mm]
&=&\sum_{k=0}^{m-1}\left(M_k-\frac{1}{M_k}\right)\frac{r^{k+1}(2-r+k(1-r))}{(1-r)^2}+\sum_{k=1}^{m-1}(k+1)r^k< 1\eeas
for $r<r_4$, where $r_4\in(0, 1)$ is the unique positive root of the equation 
\beas H(r):=1-\sum_{k=0}^{m-1}\left(M_k-\frac{1}{M_k}\right)\frac{r^{k+1}(2-r+k(1-r))}{(1-r)^2}-\sum_{k=1}^{m-1}(k+1)r^k=0.\eeas
It is evident that 
\beas H'(r)&=&-\sum_{k=0}^{m-1}\left(M_k-\frac{1}{M_k}\right)\frac{r^{k} \left(k^2 (1-r)^2+k (3-r) (1-r)+2\right)}{(1-r)^3}\\[1mm]
&&-\sum_{k=1}^{m-1} k (k+1)r^{k-1}<0\quad\text{for}\quad r\in[0, 1),\eeas
which shows that $H(r)$ is a monotonically decreasing function of $r\in[0,1)$ with $H(0)=1$ and $\lim_{r\to1^-}H(r)=-\infty$. As the function $H(r)$ is 
continuous on $[0,1)$, in view of the mean value theorem, there exists a unique $r_4\in(0,1)$ such that $H(r_4)=0$.
Therefore, in view of \textrm{Lemma \ref{lem6}}, we have $F(z)$ is sense-preserving, univalent and fully starlike in the disk $\mathbb{D}_{r_4}$, where $r_4\in(0, 1)$ is the unique positive root of the equation $H(r)=0$. This completes the proof.
 \end{proof}
In the following result, we establish the Landau-type theorem for certain subclass of reduced poly-analytic functions.
 \begin{theo}\label{T4} Suppose that $m\in\mathbb{N}$ and $M_k\geq 0$ for $k=1,2,\ldots,m-1$. Let $F(z)=\sum_{k=0}^{m-1}|z|^{2k} f_k(z)$ be a reduced poly-analytic function of order $m$ 
in the unit disk $\mathbb{D}$, where all the $f_k$ are holomorphic in $\mathbb{D}$, satisfying $F_z(0)-1=f_k(0)=0$ for $k\in\{0,1,2,\ldots,m-1\}$. If $|f_0'(z)|<M_0$ for some $M_0>0$
and $\left|f_k'(z)\right|\leq M_k$ for $k\in\{1,2,\ldots,m-1\}$ and for all $z\in\mathbb{D}$, then $M_0>1$ and $F(z)$ is univalent in $\mathbb{D}_{r_5}$ and $F(\mathbb{D}_{r_5})$ contains 
a schlicht disk $\mathbb{D}_{r_6}$, where $r_5\in(0, 1)$ is the unique positive root of the equation 
\beas &&M_0\frac{1-M_0 r}{M_0 -r}-\sum_{k=1}^{m-1}M_k\left(2k+1\right)  r^{2k}=0\eeas
\beas \text{and}&&r_6=M_0^2 r_5+(M_0^3-M_0)\log\left(1-\frac{r_5}{M_0}\right)-\sum_{k=1}^{m-1}M_k r_5^{2k+1}.\eeas
The result is sharp, with an extremal function given by 
\bea\label{f9} G_4(z)=M_0^2 z+(M_0^3-M_0)\log\left(1-\frac{z}{M_0}\right)-\sum_{k=1}^{m-1}|z|^{2k}M_k z.\eea
\end{theo}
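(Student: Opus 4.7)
The plan is to adapt the strategy of Theorem \ref{T1} to the reduced poly-analytic setting, writing $F(z)=f_0(z)+\sum_{k=1}^{m-1}z^k\ol{z}^k f_k(z)$ so that
\beas F_z(z)=f_0'(z)+\sum_{k=1}^{m-1}\bigl(k z^{k-1}\ol{z}^k f_k(z)+z^k\ol{z}^k f_k'(z)\bigr),\qquad F_{\ol{z}}(z)=\sum_{k=1}^{m-1} k z^k \ol{z}^{k-1} f_k(z).\eeas
From $F_z(0)=f_0'(0)=1$ together with $|f_0'(z)|<M_0$ on $\mathbb{D}$, the inequality $M_0>1$ is immediate. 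For the univalence claim on $\mathbb{D}_{r_5}$, I would take distinct $z_1,z_2\in\mathbb{D}_r$ with $r<r_5$, parametrize the segment $[z_1,z_2]$, and split $|F(z_2)-F(z_1)|$ into the $f_0'$ contribution and the remaining $f_k,f_k'$ terms for $k\geq 1$. Lemma \ref{lem3}(i) handles the first piece, yielding the lower bound $M_0(1-M_0r)/(M_0-r)\cdot|z_1-z_2|$.

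For the remainder, the essential input is the Schwarz-type estimate $|f_k(z)|\leq M_k|z|$ for $k\geq 1$, obtained by integrating $f_k'$ from $0$ to $z$ using $f_k(0)=0$ and $|f_k'(z)|\leq M_k$. On $\mathbb{D}_r$ the three summands then admit the bounds $kM_kr^{2k}$, $M_kr^{2k}$ and $kM_kr^{2k}$ respectively, so each $k$ contributes at most $(2k+1)M_kr^{2k}|z_1-z_2|$. This yields
\beas |F(z_2)-F(z_1)|\geq \Bigl(M_0\frac{1-M_0r}{M_0-r}-\sum_{k=1}^{m-1}(2k+1)M_kr^{2k}\Bigr)|z_1-z_2|.\eeas
Writing $G(r)$ for the bracketed expression, I would check that $G(0)=1$, $G(1)<0$, and
\beas G'(r)=\frac{M_0(1-M_0^2)}{(M_0-r)^2}-\sum_{k=1}^{m-1}2k(2k+1)M_kr^{2k-1}<0\quad\text{on}\quad[0,1]\eeas
(using $M_0>1$), so $G$ has a unique root $r_5\in(0,1)$ and univalence on $\mathbb{D}_{r_5}$ follows. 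For the schlicht disk I would pick $z'\in\pa\mathbb{D}_{r_5}$ minimizing $|F(z')|$ over $F(\pa\mathbb{D}_{r_5})$, apply the reverse triangle inequality $|F(z')|\geq|f_0(z')|-\sum_{k=1}^{m-1}r_5^{2k}|f_k(z')|$, invoke Lemma \ref{lem3}(ii) to lower-bound $|f_0(z')|$, and use $|f_k(z')|\leq M_k r_5$ to arrive at $r_6$.

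Sharpness I would verify through the extremal $G_4$ in (\ref{f9}): its components $f_0(z)=M_0^2z+(M_0^3-M_0)\log(1-z/M_0)$ and $f_k(z)=-M_kz$ saturate each bound above (in particular $f_0'(z)=M_0(1-M_0z)/(M_0-z)$ is the extremal function from Lemma \ref{lem3}). Restricting $G_4$ to real $x\in[0,1]$ gives a scalar function whose derivative is exactly $G(x)$, so it is strictly increasing on $[0,r_5]$ and strictly decreasing on $(r_5,1]$; the two-case argument used at the end of Theorem \ref{T1}'s proof, according as the value at $1$ is nonpositive or positive, then produces for each $r>r_5$ distinct real $x_1,x_2\in\mathbb{D}_r$ with $G_4(x_1)=G_4(x_2)$, while $z'=r_5$ achieves equality in the lower bound for $r_6$. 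I expect the main technical obstacle to be the careful bookkeeping of the three cross-terms arising from $\pa_z$ and $\pa_{\ol{z}}$ applied to $z^k\ol{z}^k f_k(z)$ (responsible for the factor $2k+1$), together with the verification of $G'<0$ on $[0,1]$; the remainder of the argument is a mechanical adaptation of Theorem \ref{T1}.
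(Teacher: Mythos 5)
Your proposal is correct and follows essentially the same route as the paper: the same splitting of $F_z$ and $F_{\ol z}$, Lemma \ref{lem3}(i)--(ii) for the $f_0$ contribution, the estimate $|f_k(z)|\leq M_k|z|$ for the cross-terms yielding the factor $(2k+1)M_kr^{2k}$, the monotonicity of $G$ for existence and uniqueness of $r_5$, and the two-case real-axis argument with $G_4$ for sharpness (which the paper itself only sketches by reference to Theorem \ref{T1}).
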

\begin{proof}Given that $F_z(0)=1$, it follows that $f_0'(0)=1$, and $|f_0'(z)|<M_0$ for all $z\in\mathbb{D}$ implies that $M_0>1$. In view of \textrm{Lemma \ref{lem3}}, we have the inequality (\ref{g1}). By the hypothesis, we have that $f_k(z)$ is analytic in $\mathbb{D}$ with $f_k(0)=0$ and $\left|f_k'(z)\right|\leq M_k$ in $\mathbb{D}$ for $k\in\{1,2,\ldots,m-1\}$, therefore
\bea\label{g2} \left|f_k(z)\right|&=&\left|\int_{[0,z]}f_k'(z)dz\right|\leq \int_{[0,z]} \left|f_k'(z)\right| |dz|\leq M_k\int_0^1 |z| dt=M_k |z|.\eea
First, we prove that $F$ is univalent in $\mathbb{D}_{r_5}$. Let $z_1,z_2(\not=z_1)\in\mathbb{D}_r$ for $0<r<r_5$.
For the line segment $[z_1, z_2]$ joining $z_1$ and $z_2$, we have
\beas &&|F(z_2)-F(z_1)|\\[2mm]
&=&\left|\int_{[z_1,z_2]} \left(F_z(z) dz+F_{\ol{z}}(z) d\ol{z}\right)\right|\\[2mm]
&=&\left|\int_{[z_1,z_2]} \left(\sum_{k=0}^{m-1}\ol{z}^k\left(kz^{k-1}f_k(z)+ z^k f_k'(z)\right) dz+\sum_{k=0}^{m-1}k\ol{z}^{k-1}z^k f_k(z) d\ol{z}\right)\right|\\[2mm]
&\geq&\left|\int_{[z_1,z_2]} f_0'(z)dz\right|-\left|\int_{[z_1,z_2]} \left(\sum_{k=1}^{m-1}\ol{z}^k \left(kz^{k-1}f_k(z)+ z^k f_k'(z)\right) dz\right.\right.\\[2mm]&&\left.\left.+\sum_{k=1}^{m-1}k\ol{z}^{k-1} z^kf_k(z) d\ol{z}\right)\right|.\eeas
Using (\ref{g2}), we have
\beas&&\left|\int_{[z_1,z_2]} \left(\sum_{k=1}^{m-1}\ol{z}^k \left(kz^{k-1}f_k(z)+ z^k f_k'(z)\right) dz+\sum_{k=1}^{m-1}k\ol{z}^{k-1} z^k f_k(z) d\ol{z}\right)\right|\\[2mm]
&\leq& \sum_{k=1}^{m-1}\int_{[z_1,z_2]} |\ol{z}|^k \left(k|z|^{k-1}|f_k(z)|+ |z|^k |f_k'(z)|\right) |dz|+\sum_{k=1}^{m-1}\int_{[z_1,z_2]} k|\ol{z}|^{k-1}|z|^k |f_k(z)| |d\ol{z}|\\[2mm]
&\leq& \sum_{k=1}^{m-1}\int_{[z_1,z_2]}\left(k +1\right)r^{2k} M_k  |dz|+\sum_{k=1}^{m-1}\int_{[z_1,z_2]} k M_k r^{2k} |d\ol{z}|\\[2mm]
&=&|z_2-z_1|\sum_{k=1}^{m-1}M_k\left(2k+1\right)  r^{2k}.\eeas
Therefore, we have
\bea\label{g3}  |F(z_2)-F(z_1)|\geq \left(M_0\frac{1-M_0 r}{M_0 -r}-\sum_{k=1}^{m-1}M_k\left(2k+1\right)  r^{2k}\right)|z_1-z_2|.\eea
Let $G_5(r)=M_0(1-M_0 r)/(M_0 -r)-\sum_{k=1}^{m-1}M_k\left(2k+1\right) r^{2k}$ for $r\in[0,1]$. 
It is evident that 
\beas G_5'(r)=\frac{M_0\left(1-M_0^2\right)}{(M_0-r)^2}-\sum_{k=1}^{m-1} 2k M_k\left(2k+1\right) r^{2k-1}<0\quad \text{for}\quad r\in[0, 1],\eeas
which shows that $G_5(r)$ is a monotonically 
decreasing function of $r\in[0,1]$ with $G_5(0)=1$ and $G_5(1)=-M_0-\sum_{k=1}^{m-1}M_k\left(2k+1\right)<0$. As the function $G_5(r)$ is 
continuous on $[0,1]$, in view of the mean value theorem, there exists a unique $r_5\in(0,1)$ such that $G_5(r_5)=0$. From (\ref{g3}), we have 
\beas |F(z_2)-F(z_1)|>0\quad\text{for}\quad r<r_5.\eeas 
Thus, $F(z_1)\not=F(z_2)$, which shows that $F$ is univalent in the disk $\mathbb{D}_{r_5}$.\\[2mm]
Next, we prove that $F(\mathbb{D}_{r_5})$ contains the schlicht disk $\mathbb{D}_{r_6}$. Note that $F(0)=f_0(0)=0$. For $z'\in\pa\mathbb{D}_{r_5}$ with $w'=F(z')\in F(\pa \mathbb{D}_{r_5})$ and $|w'|=\min\{ |w|: w\in F(\pa \mathbb{D}_{r_5})\}$, by \textrm{Lemma \ref{lem3}} and (\ref{g2}), we have 
\beas |w'|&=&|F(z')|=\left|\sum_{k=0}^{m-1}|z'|^{2k} f_k(z') \right|\\[2mm]
&\geq& |f_0(z')|-\left|\sum_{k=1}^{m-1}|z'|^{2k} f_k(z') \right|\\[2mm]
&\geq& M_0^2 r_5+(M_0^3-M_0)\log\left(1-\frac{r_5}{M_0}\right)-\sum_{k=1}^{m-1}M_k r_5^{2k+1}=r_6,\eeas
which implies that $F(\mathbb{D}_{r_5})\supseteq \mathbb{D}_{r_6}$.\\[2mm]
\indent Using similar argument as in the proof of \textrm{Theorem \ref{T1}} with the reduced poly-analytic function $G_4(z)$ given by (\ref{f9}), we can show that the constants $r_5$ and $r_6$ are sharp. This completes the proof.
 \end{proof}
 The following result provides an analogue of \textrm{Theorem C} for reduced poly-analytic function.
\begin{theo}\label{T5} Suppose that $m\in\mathbb{N}$ and $M_k\geq 0$ for $k=0,1,\ldots,m-1$. Let $F(z)=\sum_{k=0}^{m-1}|z|^{2k} f_k(z)$ be a reduced poly-analytic function of order $m$ 
in the unit disk $\mathbb{D}$, where all the $f_k$ are holomorphic in $\mathbb{D}$, satisfying $F_z(0)-1=f_k(0)=0$ for $k\in\{0,1,\ldots,m-1\}$ and $f_k'(0)=1$ $(k=1,2,\dots,m-1)$. If $\left|f_k(z)\right|\leq M_k$ for $k\in\{0,1,\ldots,m-1\}$ and for $z\in\mathbb{D}$, then $M_k\geq 1$ for $k\in\{0,1,\ldots,m-1\}$ and $F(z)$ is univalent in the disk $\mathbb{D}_{r_7}$ and $F(\mathbb{D}_{r_7})$ contains 
a schlicht disk $\mathbb{D}_{r_8}$, where $r_7\in(0, 1)$ is the unique positive root of the equation 
\beas &&1-\sum_{k=0}^{m-1}\left(M_k-\frac{1}{M_k}\right)\frac{\left(2k(1-r)+(2-r)\right)r^{2k+1}}{(1-r)^2}-\sum_{k=1}^{m-1}(2k+1)r^{2k}=0\\\text{and}
&& r_8=r_7-\frac{r_7(r_7^2-r_7^{2m})}{(1-r_7^2)}-\sum_{k=0}^{m-1}\left(M_k-\frac{1}{M_k}\right)\frac{r_7^{2k+2}}{1-r_7}.\eeas
\end{theo}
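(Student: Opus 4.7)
The plan is to follow the template of \textrm{Theorems \ref{T3}} and \textrm{\ref{T4}}: apply \textrm{Lemma \ref{lem1}} to control all the Taylor coefficients, estimate $|F_z(z)-1|+|F_{\bar z}(z)|$ on $\mathbb{D}_r$, and then translate that bound into a univalence radius via the standard line-integral argument; once univalence is established, a covering estimate for $|F(z')|$ on $\partial\mathbb{D}_{r_7}$ will produce the schlicht disk.

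First, since $F_z(0)=1$ forces $f_0'(0)=1$ and, by hypothesis, $f_k'(0)=1$ for $k\ge 1$, I would write the Taylor expansions $f_0(z)=z+\sum_{n\ge 2}a_{n,0}z^n$ and $f_k(z)=z+\sum_{n\ge 2}b_{k,n}z^n$ for $k\ge 1$. Then \textrm{Lemma \ref{lem1}} immediately gives $M_k\ge 1$ and $|a_{n,0}|\le M_0-1/M_0$, $|b_{k,n}|\le M_k-1/M_k$ for $n\ge 2$. Summing geometric series yields the three basic estimates
\beas
|f_0'(z)-1|&\le&\Big(M_0-\tfrac{1}{M_0}\Big)\frac{r(2-r)}{(1-r)^2},\\
|f_k(z)-z|&\le&\Big(M_k-\tfrac{1}{M_k}\Big)\frac{r^2}{1-r},\\
|f_k'(z)-1|&\le&\Big(M_k-\tfrac{1}{M_k}\Big)\frac{r(2-r)}{(1-r)^2},
\eeas
valid for $|z|\le r$, which I will use throughout.

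For the univalence claim, I would compute
\beas
F_z(z)=f_0'(z)+\sum_{k=1}^{m-1}\bigl(kz^{k-1}\bar z^kf_k(z)+|z|^{2k}f_k'(z)\bigr),\qquad F_{\bar z}(z)=\sum_{k=1}^{m-1}kz^k\bar z^{k-1}f_k(z),
\eeas
so that $F_z(0)=1$, $F_{\bar z}(0)=0$. Then for $z_1\neq z_2\in\mathbb{D}_r$ with $r<r_7$, the identity $F(z_2)-F(z_1)=(z_2-z_1)+\int_{[z_1,z_2]}\bigl((F_z(z)-1)\,dz+F_{\bar z}(z)\,d\bar z\bigr)$ and the triangle inequality give $|F(z_2)-F(z_1)|\ge|z_2-z_1|\bigl(1-\max_{|z|\le r}(|F_z(z)-1|+|F_{\bar z}(z)|)\bigr)$. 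To evaluate the right-hand side I split each $f_k(z)$ as $z+(f_k(z)-z)$ and each $f_k'(z)$ as $1+(f_k'(z)-1)$; the resulting "linear" contribution to $|F_z-1|+|F_{\bar z}|$ is $\sum_{k\ge 1}(k+1)r^{2k}+\sum_{k\ge 1}kr^{2k}=\sum_{k\ge 1}(2k+1)r^{2k}$, while the remaining error for each $k\ge 1$ combines to $(M_k-1/M_k)r^{2k+1}[2k(1-r)+(2-r)]/(1-r)^2$, the factor $2k(1-r)$ arising as $k(1-r)$ from $kz^{k-1}\bar z^k(f_k(z)-z)$ in $F_z$ plus the same from $kz^k\bar z^{k-1}(f_k(z)-z)$ in $F_{\bar z}$, and the $(2-r)$ from $|z|^{2k}(f_k'(z)-1)$. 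Adding the $k=0$ contribution $|f_0'(z)-1|$ produces exactly $1-H(r)$, where $H(r)$ is the LHS of the equation defining $r_7$. Direct differentiation shows $H'(r)<0$ on $[0,1)$; together with $H(0)=1$ and $\lim_{r\to 1^-}H(r)=-\infty$, the IVT furnishes a unique $r_7\in(0,1)$, and $|F(z_2)-F(z_1)|>0$ whenever $r<r_7$.

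Finally, for the schlicht disk I would estimate, for $z'\in\partial\mathbb{D}_{r_7}$,
\beas
|F(z')|\ge|f_0(z')|-\sum_{k=1}^{m-1}|z'|^{2k}|f_k(z')|\ge r_7-\Big(M_0-\tfrac{1}{M_0}\Big)\frac{r_7^2}{1-r_7}-\sum_{k=1}^{m-1}r_7^{2k}\Big(r_7+\Big(M_k-\tfrac{1}{M_k}\Big)\frac{r_7^2}{1-r_7}\Big),
\eeas
using $|f_0(z')|\ge|z'|-|f_0(z')-z'|$ and $|f_k(z')|\le|z'|+|f_k(z')-z'|$; evaluating the geometric series $\sum_{k=1}^{m-1}r_7^{2k+1}=r_7(r_7^2-r_7^{2m})/(1-r_7^2)$ then collapses everything to the claimed value of $r_8$. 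The main obstacle is the bookkeeping in the univalence step: the contribution of $f_k(z)-z$ must be counted \emph{twice}, once through $F_z-1$ and once through $F_{\bar z}$, before adding the $|f_k'(z)-1|$ part, and the characteristic factor $2k(1-r)+(2-r)$ only emerges after this pairing is carried out correctly.
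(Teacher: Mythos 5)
Your proposal is correct and follows essentially the same route as the paper: both apply Lemma \ref{lem1} to get $M_k\ge 1$ and the coefficient bounds $|b_{k,n}|\le M_k-1/M_k$, both reduce univalence to the line-integral estimate $|F(z_2)-F(z_1)|\ge |z_2-z_1|\,G_6(r)$ with the identical decreasing function $G_6$, and both obtain $r_8$ from the same lower bound for $|F(z')|$ on $\partial\mathbb{D}_{r_7}$. The only difference is organizational — you package the Taylor tails into the closed-form bounds $|f_k(z)-z|\le (M_k-1/M_k)r^2/(1-r)$ and $|f_k'(z)-1|\le (M_k-1/M_k)r(2-r)/(1-r)^2$ before combining, whereas the paper carries the series through and sums at the end — and your bookkeeping (the $2k(1-r)$ from the two $kz^{k-1}\bar z^k f_k$-type terms plus $(2-r)$ from $|z|^{2k}f_k'$) reproduces the paper's computation exactly.
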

\begin{proof} 
Given that $F_z(0)=1$, it follows that $f_0'(0)=1$. Thus, we have that all $f_k$ are analytic in $\mathbb{D}$ with $f_k(0)=f_k'(0)-1=0$ for all $k=0,1,2,\ldots, m-1$. Let
$f_k(z) =z+\sum_{n=2}^\infty b_{k,n}z^n$ for all $k$. In view of \textrm{Lemma \ref{lem1}}, we have $M_k\geq 1$ and 
\bea\label{g6} |b_{k,n}|\leq M_k-\frac{1}{M_k}\quad\text{for}\quad n\geq2\;\;\text{and for all $k$}.\eea
First, we prove that $F$ is univalent in $\mathbb{D}_{r_7}$. Let $z_1,z_2(\not=z_1)\in\mathbb{D}_r$ for $0<r<r_7$.
Using similar argument as in the proof of \textrm{Theorem \ref{T2}}, we have 
\bea\label{g8}&& |F(z_2)-F(z_1)|\nonumber\\
&\geq &\left|\int_{[z_1,z_2]} dz\right|-\left|\int_{[z_1,z_2]} \left(f_0'(z)-1\right)dz\right|\\[2mm]
&&-\left|\int_{[z_1,z_2]} \left(\sum_{k=1}^{m-1}\ol{z}^k \left(kz^{k-1}f_k(z)+ z^k f_k'(z)\right)dz+\sum_{k=1}^{m-1}k\ol{z}^{k-1} z^k f_k(z) d\ol{z}\right)\right|.\nonumber\eea
In view of \textrm{Lemma \ref{lem1}}, we obtain the inequality (\ref{g5}).
Using (\ref{g6}), we have
\beas &&\left|\int_{[z_1,z_2]} \left(\sum_{k=1}^{m-1}\ol{z}^k \left(kz^{k-1}f_k(z)+ z^k f_k'(z)\right)dz+\sum_{k=1}^{m-1}k\ol{z}^{k-1}z^k f_k(z) d\ol{z}\right)\right| \\[2mm]
&\leq&\int_{[z_1,z_2]} \left(\sum_{k=1}^{m-1}r^k \left((k+1)r^{k}+\sum_{n=2}^\infty (k+n)|b_{k,n}|r^{n+k-1}\right)|dz|\right.\\&&\left.+\sum_{k=1}^{m-1}\left(kr^{2k} +\sum_{n=2}^\infty k|b_{k,n}|r^{n+2k-1}\right) |d\ol{z}|\right)\\[2mm]
&\leq &|z_1-z_2|\left(\sum_{k=1}^{m-1}r^{2k}\left(M_k-\frac{1}{M_k}\right)\left(\sum_{n=2}^\infty (2k+n)r^{n-1}\right)+\sum_{k=1}^{m-1}(2k+1)r^{2k}\right)\\[2mm]
&=&|z_1-z_2|\left(\sum_{k=1}^{m-1}\left(M_k-\frac{1}{M_k}\right)\frac{\left(2k(1-r)+(2-r)\right)r^{2k+1}}{(1-r)^2}+\sum_{k=1}^{m-1}(2k+1)r^{2k}\right).
\eeas
Using (\ref{g5}) in (\ref{g8}), we have
\bea\label{g7}&&|F(z_2)-F(z_1)|\\
&\geq& \left(1-\sum_{k=0}^{m-1}\left(M_k-\frac{1}{M_k}\right)\frac{\left(2k(1-r)+(2-r)\right)r^{2k+1}}{(1-r)^2}-\sum_{k=1}^{m-1}(2k+1)r^{2k}\right)|z_1-z_2|.\nonumber \eea
Let 
\beas G_6(r)=1-\sum_{k=0}^{m-1}\left(M_k-\frac{1}{M_k}\right)\frac{\left(2k(1-r)+(2-r)\right)r^{2k+1}}{(1-r)^2}-\sum_{k=1}^{m-1}(2k+1)r^{2k}\eeas 
for $r\in[0,1)$. 
It is evident that 
\beas G_6'(r)&=&-\sum_{k=0}^{m-1}\left(M_k-\frac{1}{M_k}\right)\frac{2 r^{2 k} \left(2 k^2 (1-r)^2+k (3-r) (1-r)+1\right)}{(1-r)^3}\\[1mm]
&&-\sum_{k=1}^{m-1} 2k (2k+1)r^{2k-1}<0\quad\text{for}\quad r\in[0, 1),\eeas
which shows that $G_6(r)$ is a monotonically decreasing function of $r\in[0,1)$ with $G_6(0)=1$ and $\lim_{r\to1^-}G_6(r)=-\infty$. As the function $G_6(r)$ is 
continuous on $[0,1)$, in view of the mean value theorem, there exists a unique $r_7\in(0,1)$ such that $G_6(r_7)=0$. From (\ref{g7}), we have 
\beas |F(z_2)-F(z_1)|>0\quad\text{for}\quad r<r_7.\eeas 
Thus, $F(z_1)\not=F(z_2)$, which shows that $F$ is univalent in the disk $\mathbb{D}_{r_7}$.\\[2mm]
Next, we prove that $F(\mathbb{D}_{r_7})$ contains the schlicht disk $\mathbb{D}_{r_8}$. Note that $F(0)=f_0(0)=0$. For any $z'\in\pa\mathbb{D}_{r_7}$ with $w'=F(z')\in F(\pa \mathbb{D}_{r_7})$, it follows from (\ref{g6}) that
\beas |w'|&=&|F(z')|=\left|z'+\sum_{n=2}^\infty b_{0,n}z'^n+\sum_{k=1}^{m-1}|z'|^{2k} \left(z'+\sum_{n=2}^\infty b_{k,n}z'^n\right) \right|\\[2mm]
&\geq&r_7-\sum_{k=1}^{m-1}r_7^{2k+1}-\sum_{k=0}^{m-1}r_7^{2k}\left(M_k-\frac{1}{M_k}\right)\left(\sum_{n=2}^\infty r_7^n\right)\\[2mm]
&=& r_7-\frac{r_7(r_7^2-r_7^{2m})}{(1-r_7^2)}-\sum_{k=0}^{m-1}\left(M_k-\frac{1}{M_k}\right)\frac{r_7^{2k+2}}{1-r_7}=r_8,\eeas
which implies that $F(\mathbb{D}_{r_7})\supseteq \mathbb{D}_{r_8}$. This completes the proof.
\end{proof}
\section*{Declarations}
\noindent{\bf Acknowledgment:} The work of the second author is supported by University Grants Commission (IN) fellowship (No. F. 44 - 1/2018 (SA - III)).\\[1mm]
{\bf Conflict of Interest:} The authors declare that there are no conflicts of interest regarding the publication of this paper.\\[1mm]
{\bf Availability of data and materials:} Not applicable.\\[1mm]
{\bf Authors' contributions:} All authors contributed equally to the investigation of the problem, and all authors have read and approved the final manuscript.

\end{document}